\renewcommand{\mathcal}{\mathscr}
\theoremstyle{definition}
\newtheorem{ntn}{Notation}[section]
\newtheorem{dfn}[ntn]{Definition}
\theoremstyle{plain}
\newtheorem{lem}[ntn]{Lemma}
\newtheorem{prp}[ntn]{Proposition}
\newtheorem{thm}[ntn]{Theorem}
\newtheorem{cor}[ntn]{Corollary}
\theoremstyle{remark}
\newtheorem{rmk}[ntn]{Remark}
\newtheorem{exa}[ntn]{Example}
\numberwithin{equation}{section}
\newcommand{\ideal}[1]{{\left\langle#1\right\rangle}}
\newcommand{\xymat}{\SelectTips{cm}{}\xymatrix}
\newcommand{\into}{\hookrightarrow}
\newcommand{\infrom}{\hookleftarrow}
\newcommand{\onto}{\twoheadrightarrow}
\newcommand{\llangle}{\langle\langle}
\newcommand{\rrangle}{\rangle\rangle}
\newcommand{\p}{\partial}
\newcommand{\prodcup}{\mathrel{\ooalign{$\cup$\cr\hss\scalebox{.75}{\raisebox{.5ex}{$\times$}}\hss}}}
\newcommand{\reg}{\mathrm{reg}}
\newcommand{\res}[2]{{\begin{bmatrix}#1\\#2\end{bmatrix}}}
\newcommand{\wh}{\widehat}
\newcommand{\wt}{\widetilde}
\newcommand{\ol}{\overline}
\newcommand{\ul}{\underline}
\renewcommand{\aa}{\mathfrak{a}}
\newcommand{\CC}{\mathds{C}}
\newcommand{\F}{\mathcal{F}}
\newcommand{\I}{\mathcal{I}}
\newcommand{\M}{\mathcal{M}}
\renewcommand{\O}{\mathcal{O}}
\newcommand{\mm}{\mathfrak{m}}
\newcommand{\NN}{\mathds{N}}
\newcommand{\pp}{\mathfrak{p}}
\newcommand{\qq}{\mathfrak{q}}
\DeclareMathOperator{\ann}{Ann}
\DeclareMathOperator{\Ass}{Ass}
\DeclareMathOperator{\coker}{coker}
\DeclareMathOperator{\depth}{depth}
\DeclareMathOperator{\Der}{Der}
\DeclareMathOperator{\edim}{edim}
\DeclareMathOperator{\End}{End}
\DeclareMathOperator{\Ext}{Ext}
\DeclareMathOperator{\grade}{grade}
\DeclareMathOperator{\height}{ht}
\DeclareMathOperator{\Hom}{Hom}
\DeclareMathOperator{\id}{id}
\DeclareMathOperator{\img}{im}
\DeclareMathOperator{\rk}{rk}
\DeclareMathOperator{\Spec}{Spec}
\DeclareMathOperator{\Supp}{Supp}
\DeclareMathOperator{\Tr}{Tr}
\begin{document}

\title{On Saito's normal crossing condition}

\author{Mathias Schulze}
\address{
M.~Schulze\\
Department of Mathematics\\
University of Kaiserslautern\\
67663 Kaiserslautern\\
Germany}
\email{mschulze@mathematik.uni-kl.de}
\thanks{The research leading to these results has received funding from the People Programme (Marie Curie Actions) of the European Union's Seventh Framework Programme (FP7/2007-2013) under REA grant agreement n\textsuperscript{o} PCIG12-GA-2012-334355.}

\date{\today}

\subjclass{14H20 (Primary) 14M07, 32A27 (Secondary)}





\keywords{logarithmic differential form, regular differential form, normal crossing divisor, residue, duality}

\begin{abstract}
Kyoji Saito defined a residue map from the logarithmic differential $1$-forms along a reduced complex analytic hypersurface to the meromorphic functions on the hypersurface.
He studied the condition that the image of this map coincides with the weakly holomorphic functions, that is, with the functions on the normalization.
With Michel Granger, the author proved that this condition is equivalent to the hypersurface being normal crossing in codimension one.
In this article, the condition is given a natural interpretation in terms of regular differential forms beyond the hypersurface case.
For reduced equidimensional complex analytic spaces which are free in codimension one, the geometric interpretation of being normal crossing in codimension one is shown to persist.
\end{abstract}

\maketitle

\section*{Introduction}

Saito~\cite{Sai80} introduced the complex of logarithmic differential forms along a reduced hypersurface $D$ in a smooth complex manifold $S$.
It is defined as 
\[
\Omega^\bullet(\log D)=\{\omega\in\Omega_S^\bullet(D)\mid d\I_D\wedge\omega\subseteq\Omega^{\bullet+1}_S\}
\]
where $\I_D$ is the ideal sheaf of $D$.
Locally, if $\I_D=\ideal{h}$, such forms are characterized by having a presentation as
\[
g\omega=\frac{dh}{h}\wedge\xi+\eta
\]
where $\xi\in\Omega^{\bullet-1}_S$ and $\eta\in\Omega^{\bullet}_S$ have no pole and $g\in\O_S$ maps to a non-zero divisor in $\O_D$.
He defined a logarithmic residue map
\begin{equation}\label{6}
\rho_D\colon\Omega^\bullet(\log D)\to\M_D\otimes_{\O_D}\Omega_D^{\bullet-1},\quad\omega\mapsto\frac{\xi}{g}\vert_D
\end{equation}
where $\M_D=Q(\O_D)$ denotes the meromorphic functions on $D$.
This residue map gives rise to an exact sequence
\begin{equation}\label{7}
\xymat{
0\ar[r] & \Omega_S^\bullet\ar[r] & \Omega^\bullet(\log D)\ar[r]^-{\rho_D} & \sigma^{\bullet-1}_D\ar[r] & 0
}
\end{equation}
where $\sigma^{\bullet-1}_D$ denotes the image of $\rho_D$.
Let $\nu_D\colon\tilde D\to D$ be a normalization and note that $\M_D=\M_{\tilde D}$.
Saito~\cite[(2.8),(2.11)]{Sai80} showed that
\begin{equation}\label{13}
(\nu_D)_*\O_{\tilde D}\subseteq\sigma_D^0
\end{equation}
and that, if $D$ is a plane curve, equality holds if and only $D$ is normal crossing.
Generalizing this result to reduced hypersurfaces $D$, Granger and the author~\cite{GS14} showed that equality in \eqref{13} is equivalent to $D$ being normal crossing in codimension one.
The purpose of this article is to further generalize this preceding result.

In \S\ref{76}, we suggest a more general point of view for the equality in \eqref{13}.
It is based on Aleksandrov's result~\cite[\S4, Cor.~2]{Ale90} that $\sigma^\bullet_D=\omega_D^\bullet$ where the latter denotes the regular differential forms on $D$.
With Tsikh~\cite[Thm.~2.4]{AT01} (or \cite[Thm.~3.1]{AT08}) and later in \cite[Thm.~2]{Ale12} he generalized this result to complete intersections using (different versions of) multilogarithmic differential forms and their residues.
We relate it to Aleksandrov's multilogarithmic residue map and we comment on some claims made in \cite{Ale12}.
Regular differential forms are defined under more general hypotheses.
More specifically let $X$ be a reduced equidimensional complex analytic singularity with normalization $\nu_X\colon\tilde X\to X$.
Due to normality of $\tilde X$, we have $\O_{\tilde X}=\omega_{\tilde X}^0$ (see Corollary~\ref{45}).
We shall therefore refer to the equality
\[
(\nu_X)_*\omega_{\tilde X}^0=\omega_X^0
\]
resulting from \eqref{13} as \emph{Saito's normal crossing condition}.
Our approach is independent of an embedding and does not require a generalization of logarithmic differential forms such as multilogarithmic differential forms in the complete intersection case.
While Aleksandrov and Tsikh use Barlet's description of regular differential forms in the complex analytic context (see \cite{Bar78}) we prefer to rely on a general algebraic approach due to Kersken that is reviewed in \S\ref{32}.
In \S\ref{40} and \S\ref{75}, we study Saito's normal crossing condition for reduced curve and Gorenstein singularities. 
In \S\ref{36} we give it the following geometric interpretation analogous to \cite[Thm.~1.2]{GS14} in the hypersurface case.

\begin{thm}\label{70}
Let $X$ be a reduced equidimensional complex analytic singularity which is free in codimension one.
Then $X$ satisfies Saito's normal crossing condition if and only if $X$ is a normal crossing divisor in codimension one.\qed
\end{thm}

The additional freeness hypothesis replaces the fact that any reduced hypersurface is a free divisor in codimension one.
Our generalization of freeness is motivated by Aleksandrov--Terao theorem (see \cite[\S2 Thm.]{Ale88} and \cite[Prop.~2.4]{Ter80a}) stating that freeness of a reduced hypersurface is equivalent to Cohen--Macaulayness of the Jacobian ideal.
We call a reduced Gorenstein singularity free if the $\omega$-Jacobian ideal is a Cohen--Macaulay ideal (see Definition~\ref{98}).
In case of complete intersections of codimension $k$ Pol~\cite[Thm.~4.5]{Pol15} showed that freeness is equivalent to the projective dimension of multilogarithmic differential $k$ forms being equal to (or equivalently bounded by) $k-1$.
Her approach is a direct generalization of the one taken in \cite{GS14}.

\subsection*{Acknowledgments}

The author is grateful to Michel Granger and Delphine Pol for helpful comments.

\section{Regular and logarithmic differential forms}\label{32}

Fix a complete valued field $k$ of characteristic $0$ and let $A$ be a local analytic $k$-algebra of dimension $r\ge1$.
In particular $A$ is Noetherian, Henselian and catenary (see \cite[II.\S0.1,\S6.2]{GR71}).
Informally we refer to $A$ as a \emph{singularity}.

If $A$ admits a positive grading in the sense of Scheja and Wiebe (see \cite[\S3]{SW73}) then we call it a \emph{quasihomogeneous singularity}.
This means that $\mm_A$ is generated by eigenvectors of an \emph{Euler derivation} $\chi\in\Der_k(A,\mm_A)$ with positive rational eigenvalues $w_1,\dots,w_n$.
In this case one can write $\chi=\sum_{i=1}^nw_ix_i\p_{x_i}$.
If $w_1=\cdots=w_n$ then we call the grading a \emph{standard grading} and $A$ a \emph{homogeneous singularity}.

We denote by $Q(-)$ the total ring of fractions and abbreviate $L:=Q(A)$.
Let $R=k\llangle x_1,\dots,x_n\rrangle$ denote the regular ring of convergent power series over $k$ in $n$ variables $x_1,\dots,x_n$.
It is a formal power series ring in case the valuation is trivial.
For a suitable $n$, pick a finite $k$-algebra homomorphism
\begin{equation}\label{27}
R\to A
\end{equation}
of codimension $m=n-r$.

\subsection{Kersken's regular differential forms}\label{32a}

We begin by reviewing Kersken's description of regular differential forms (see \cite{Ker83a,Ker83b,Ker84}).
Denote by $\Omega_A:=\Omega_{A/k}$ the universally finite differential algebra of $A$ over $k$ (see \cite[\S11]{Kun86}).
In particular, $\Omega_A=\bigoplus_{p\in\NN}\Omega_A^p$ is graded with differential $d:\Omega_A\to\Omega_A[1]$ of degree $1$.
Let $C(A)$ be the (unaugmented) Cousin complex of $A$
\[
C(A)\colon 0\to C^0(A)\to C^1(A)\to\cdots
\]
with respect to $A$-active sequences (see~\cite[\S2]{Ker83a}).
It is a resolution of $A$ if and only if $A$ is Cohen--Macaulay and a (minimal) injective resolution if and only if $A$ is Gorenstein (see ~\cite{Sha69}).
Setting $C_\Omega(A):=C(A)\otimes_A\Omega_A$, the residue complex of $A$ is the complex of graded $(\Omega_A,d)$-modules
\[
D_\Omega(A):=\ul\Hom_{\Omega_R}(\Omega_A,C_\Omega(R))[m;m]
\]
where $\ul\Hom_{\Omega_R}$ denotes graded $\Hom_{\Omega_R}$ and $[m;m]$ signifies a shift by $m$ of both the $\Omega_R$-module and Cousin complex grading.
Notably this definition is independent of the choice of \eqref{27} (see \cite[(3.3)]{Ker83b}).
We write $\delta$ both for the Cousin differential of $C(R)$ and induced differentials.
The $0$th cohomology of $D_\Omega(A)$ with respect to $\delta$ is a graded $(\Omega_A,d)$-module
\[
\omega_A:=H^0(D_\Omega(A),\delta),
\]
the complex of \emph{regular differential forms} over $A$ (see \cite[p.~442]{Ker83b}).
For any graded $\Omega_R$-module $M$ one can identify (see~\cite[(3.6)]{Ker83b})
\begin{equation}\label{9}
\ul\Hom_{\Omega_R}(M,C_\Omega(R))=\Hom_R(M[n],\Omega_R^n\otimes_RC(R)).
\end{equation}
Since $C(R)$ is an injective resolution of $R$, this implies that $C_\Omega(R)$ is an injective resolution of $\Omega_R$.
It follows that (see~\cite[\S6]{Ker83b})
\[
\omega_A=\ul\Ext_{\Omega_R}^m(\Omega_A,\Omega_R)[m]
\]
which has graded components
\begin{equation}\label{10}
\omega^p_A=\Ext^m_R(\Omega_A^{r-p},\Omega_R^n)=\Hom_A(\Omega_A^{r-p},\omega_A^r)
\end{equation}
due to \eqref{9}, adjunction of $-\otimes_AA$ and $\Hom_R(A,-)$, and since $\Hom_R(A,C(R)^q)=0$ for $q<m$.

Kersken~\cite[\S5]{Ker83b} constructs a \emph{trace form}\footnote{Its construction uses that $k$ has characteristic $0$.} $c_A\in\omega_A^0$.
In case \eqref{27} is a \emph{Noether normalization} (see \cite[II.\S2.2]{GR71}), $c_A\in\omega_A^0=\ul\Hom_{\Omega_R}(\Omega_A,\Omega_R)$ restricts to (see \cite[(5.1.4)]{Ker83b})
\begin{equation}\label{82}
c_A\vert_{A\otimes_R\Omega_R}=\Tr_{A/R}\otimes_R\Omega_R\colon A\otimes_R\Omega_R\to\Omega_R
\end{equation}
where $\Tr_{A/R}\in\Hom_R(A,R)$ is the trace of $A$ over $R$ (see \cite[(10.3)]{SS74}).
It induces a unique \emph{trace map} of complexes of $(\Omega_A,d)$-modules (see \cite[(5.6)]{Ker83b})
\[
\gamma_A\colon C_\Omega(A)\to D_\Omega(A),\quad 1\mapsto c_A
\]
which is an isomorphism at regular primes of $A$ (see \cite[(5.7.2)]{Ker83b}).

If $A$ is reduced and equidimensional then  
\begin{equation}\label{91}
\xymat{
\Omega_A\otimes_A L=C^0_\Omega(A)\ar[r]^-{\gamma_A^0}_-\cong & D^0_\Omega(A)
}
\end{equation}
is an isomorphism.
It serves to identify $\omega_A$ with its preimage
\begin{equation}\label{81}
\sigma_A:=(\gamma_A^0)^{-1}(\omega_A),
\end{equation}
the complex of \emph{regular (meromorphic) differential forms} over $A$.
Under the identification \eqref{10} becomes
\begin{equation}\label{1}
\sigma^p_A=\Hom_A(\Omega_A^{r-p},\sigma_A^r).
\end{equation}
Composing $\Omega_A\to\Omega_A/T(\Omega_A)$ with $H^0(\gamma_A)$ yields a  map 
\begin{equation}\label{11}
c_A\colon\Omega_A\to\omega_A
\end{equation}
which is an isomorphism at regular primes of $A$ (see \cite[(5.7.3)]{Ker83b}).
We denote its cokernel by
\begin{equation}\label{55}
N_A:=\coker c_A.
\end{equation}
The preceding objects then fit into a commutative diagram
\begin{equation}\label{71}
\xymat{
&\Omega_A\otimes_A L\ar[r]^{\gamma_A^0}_\cong & D^0_\Omega(A)\\
&\sigma_A\ar[r]^\cong\ar@{^(->}[u] & \omega_A\ar@{^(->}[u]\\
\Omega_A\ar[uur]\ar[urr]_{c_A}\ar[ur]
}
\end{equation}
where the leftmost map is the canonical one.
In particular, its degree-$0$ part $A\into L$ factors through an inclusion
\begin{equation}\label{33}
c_A^0\colon A\into\sigma_A^0\cong\omega_A^0.
\end{equation}

If \eqref{27} is a presentation $R\onto A$ with kernel $\aa$ then (see \cite[Props.~3.8, 11.9]{Kun86})
\begin{equation}\label{90}
\Omega_A^p=\Omega_R^p/(\aa\Omega_R^p+d\aa\wedge\Omega_R^{p-1})=\bigwedge^p\Omega_A^1.
\end{equation}
In other words, $\Omega_A$ is an exterior differential algebra.
It follows that
\begin{equation}\label{17}
D_\Omega(A)=\ann_{C_\Omega(R)}(\aa\Omega_R+d\aa\wedge\Omega_R)[m;m].
\end{equation}
Elements of $C_\Omega(R)$ can be represented by residue symbols (see \cite[\S2]{Ker83a}), which lie by definition in the image of some map
\begin{equation}
\label{93}\Phi_{f_1,\dots,f_q}\colon(\Omega^p_R/\ideal{f_1,\dots,f_q}\Omega^p_R)_g\into C^q_\Omega(R),\quad\ol \xi/g\mapsto\res{\xi/g}{f_1,\dots,f_q},
\end{equation}
where $f_1,\dots,f_q,g$ is an $R$-sequence.
Injectivity of this map follows from \cite[(2.6)]{Ker83a} and Wiebe's Theorem~(see \cite[E.21]{Kun86}) using that the $\Omega_R^p$ are free $R$-modules.
The (induced) Cousin differential $\delta$ operates as (see \cite[(2.5)]{Ker83a}) 
\[
\delta\res{\xi/g}{f_1,\dots,f_q}=\res{\xi}{f_1,\dots,f_q,g}.
\]
Thus, elements of $\ker\delta$ are of the form $\res{\xi}{f_1,\dots,f_q}$ where $\ol\xi\in\Omega^p_R/\ideal{f_1,\dots,f_q}\Omega^p_R$.
One may assume that $f_1,\dots,f_m\in\aa$ after multiplying $\xi$ by a suitable transition determinant (see \cite[(2.5.3)]{Ker83a}).
Combined with \eqref{17} this yields the explicit description (see \cite[(1.2)]{Ker84}) 
\begin{align}\label{4}
\omega_A^p=\Bigl\{\res{\xi}{f_1,\dots,f_m}\Bigm\vert\ & \xi\in\Omega_R^{p+m},\ f_1,\dots,f_m\in\aa\ R\text{-sequence},\\
\nonumber & \aa\xi\equiv0\equiv d\aa\wedge\xi\mod\ideal{f_1,\dots,f_m}\Omega_R\Bigr\}.
\end{align}

\subsection{Aleksandrov's multilogarithmic residue}\label{32b}

In the following we describe Aleksandrov's generalization (see \cite{Ale12}) to complete intersections of \eqref{7} in relation with Kersken's description of regular differential forms in \S\ref{32a}.
To this end, consider $A=R/\aa$ with $\aa=\ideal{h_1,\dots,h_m}$ generated by an $R$-sequence $h_1,\dots,h_m$.
Then (see \cite[p.~445]{Ker83b})
\begin{equation}\label{60}
\gamma_A^q\colon\res{\ol\xi/\ol s}{\ol f_1,\dots,\ol f_q}\mapsto\res{d\ul h\wedge\xi/s}{\ul h,f_1,\dots,f_q}
\end{equation}
where $d\ul h:=dh_1\wedge\cdots\wedge dh_m$.
In particular, 
\begin{equation}\label{42}
c_A=\gamma_A^0(1)=\res{d\ul h}{\ul h}.
\end{equation}

The following types of differential forms with simple poles where introduced by Saito (see \cite{Sai80}) and implicitly by Aleksandrov (see \cite{Ale12}).
Notably the multilogarithmic differential forms of Aleksandrov and Tsikh (see \cite{AT01,AT08}) not considered here have arbitrary poles (see \cite[Appendix~B]{Pol15} for details).

\begin{dfn}\label{92}
Let $\ul h=h_1,\dots,h_m$ be an $R$-sequence and set $h:=h_1\cdots h_m$.
Then the \emph{logarithmic differential forms} along $\ideal{h}$ and the \emph{multilogarithmic differential forms} along $\ul h$ are defined respectively by
\begin{align*}
\Omega_R(\log\ideal{h})&:=\Bigl\{\omega\in\frac1h\Omega_R \Bigm\vert dh\wedge\omega\in\Omega_R\Bigr\},\\
\Omega_R(\log\ul h)&:=\Bigl\{\omega\in\frac1h\Omega_R\Bigm\vert\forall j=1,\dots,m\colon  dh_j\wedge\omega\in\sum_{i=1}^m\frac{h_i}{h}\Omega_R\Bigr\}.
\end{align*}
\end{dfn}

\begin{lem}\label{25}
Let $\ul h=h_1,\dots,h_m$ be an $R$-sequence. 
\begin{enumerate}[(a)]

\item\label{25a} An alternative definition of logarithmic differential forms reads
\begin{equation}\label{57}
\Omega_R(\log\ideal{h})=\Bigl\{\omega\in\frac1h\Omega_R\Bigm\vert\forall j=1,\dots,m\colon dh_j\wedge\omega\in\frac{h_j}h\Omega_R\Bigr\}.
\end{equation}
In particular, $\Omega_R(\log\ideal{h})\subseteq\Omega_R(\log\ul h)$ with equality for $m=1$.

\item\label{25b} There is an inclusion
\[
dh_i\wedge\Omega_R(\log\ideal{h})\subseteq\Omega_R(\log\ideal{h/h_i}).
\]

\item\label{25c} If $m\le2$ then
\[
\Omega_R(\log\ideal{h})\cap\sum_{i=1}^m\frac{h_i}h\Omega_R=\sum_{i=1}^m\Omega_R(\log\ideal{h/h_i}).
\]

\end{enumerate}
\end{lem}

\begin{proof}\
\begin{asparaenum}[(a)]

\item For $\omega\in\Omega_R(\log\ideal{h})$, we have 
\[
\sum_{i=1}^m\frac{h}{h_i}dh_i\wedge(h\omega)=hdh\wedge\omega\in h\Omega_R
\]
with $dh_i\wedge(h\omega)\in\Omega_R$.
Note that the factors $h_1,\dots,h_m$ of $h$ are pairwise coprime because they form an $R$-sequence.
It follows that $dh_i\wedge(h\omega)\in h_i\Omega_R$ for $i=1,\dots,m$.
Conversely, this latter condition implies that $dh\wedge\omega=\sum_{i=1}^m\frac {dh_i}{h_i}\wedge(h\omega)\in\Omega_R$.

\item For $\omega\in\Omega_R(\log\ideal{h})$, \eqref{25a} yields 
\[
dh_j\wedge dh_i\wedge\omega\in\frac{h_i}h\Omega_R\cap\frac{h_j}h\Omega_R=\frac{h_ih_j}{h}\Omega_R
\]
for $i\ne j$ and hence $dh_i\wedge\omega\in\Omega_R(\log\ideal{h/h_i})$.

\item Let $\sum_{i=1}^m\omega_i\in\Omega_R(\log\ideal{h})$ with $\omega_i\in\frac{h_i}h\Omega_R$ and set $\eta_i:=\frac h{h_i}\omega_{i}\in\Omega_R$.
By \eqref{25a} and \eqref{25b}, we have $dh_j\wedge\sum_{i\ne j}\omega_i\in\frac{h_j}h\Omega_R$ and hence $\sum_{i\ne j}h_idh_j\wedge\eta_i\in h_j\Omega_R$ for $j=1,\dots,m$.
Since $m\le2$ this implies that $dh_j\wedge\eta_i\in h_j\Omega_R$ and hence $dh_j\wedge\omega_i\in\frac{h_ih_j}{h}\Omega_R$ for $i\ne j$.
Thus, $\omega_i\in\Omega(\log\ideal{h/h_i})$ for $i=1,\dots,m$.\qedhere

\end{asparaenum}
\end{proof}

The following sequences appear in \cite[\S4, Lem.~1, \S6, Thm.~2]{Ale12}.

\begin{prp}\label{22}
Let $\ul h=h_1,\dots,h_m$ be an $R$-sequence.
Then there is a commutative diagram with exact top row (and exact bottom row if $m\le2$)
\begin{equation}\label{23}
\xymat{
0\ar[r]&\sum_{i=1}^m\frac{h_i}{h}\Omega_R\ar[r]&\Omega_R(\log\ul h)\ar[r]^-{\rho_{\ul h}}&\omega_A\ar[r]&0\\
0\ar[r]&\sum_{i=1}^m\Omega_R(\log\ideal{h/h_i})\ar@{^(->}[u]\ar[r]&\Omega_R(\log\ideal{h})\ar@{^(->}[u]\ar[r]^-{\rho_{\ul h}'}&\omega_A\ar@{=}[u]
}
\end{equation}
where $\rho_{\ul h}$ denotes the composition
\begin{equation}\label{2}
\xymat@R=0em{
\Omega_R(\log\ul h)\ar@{^(->}[r]^-{h\cdot} & \Omega_R\ar[r] & \Omega_R/\ideal{\ul h}\Omega_R\ar@{^(->}[r]^-{\Phi_{\ul h}} & \omega_A,\\
\omega=\frac\eta h\ar@{|->}[rrr] &&&\res{\eta}{\ul h}=z,
}
\end{equation}
with $\Phi_{\ul h}$ from \eqref{93}.
\end{prp}

\begin{proof}
By \eqref{4} and Definition~\ref{92} the map $\rho_{\ul h}$ is well-defined.
Using \cite[(2.5.3)]{Ker83a} and Wiebe's Theorem (see \cite[E.21]{Kun86}), any element of $\omega_A$ can be rewritten as in \eqref{4} with $f_1,\dots,f_m=\ul h$.
The vanishing conditions in \eqref{4} reduce to 
\[
dh_j\wedge\xi\equiv0\mod\ideal{\ul h}\Omega_R.
\]
Thus, the map $\rho_{\ul h}$ is surjective with kernel arising from the middle map in \eqref{2}.
If $m\le2$ then the left square in \eqref{23} is cartesian due to Lemma~\ref{25}.\eqref{25c}.
\end{proof}

We deduce the following characterization of multilogarithmic differential forms appearing in \cite[Thm.~1]{Ale12} (see also \cite[Prop.~2.1]{AT01} or \cite[Prop.~1.1]{AT08}).

\begin{cor}\label{35}
Let $\ul h=h_1,\dots,h_m$ be an $R$-sequence such that $A=R/\ideal{\ul h}$ is reduced.
For any $\omega\in\Omega_R(\log\ul h)$ there is a $g\in R$ with $\ol g\in A^\reg$, a $\xi\in\Omega_R$, and $\eta_i\in\frac{h_i}h\Omega_R$ for $i=1,\dots,m$, such that
\begin{equation}\label{34}
g\omega=\frac{d\ul h}{h}\wedge\xi+\sum_{i=1}^m\eta_i.
\end{equation}
Conversely, any $\omega\in\Omega_{R,h}$ admitting a representation \eqref{34} lies in $\Omega_R(\log\ul h)$.
\end{cor}

\begin{proof}
Let $\omega$ and $z$ be as in \eqref{2}.
By the isomorphism \eqref{91} and by \eqref{60}, there is a $g\in R$ and a $\xi\in\Omega_R$ as in the claim such that
\begin{equation}\label{20}
\rho_{\ul h}(g\omega)=
\res{g\eta}{\ul h}=
gz=
\gamma_A^0(\ol\xi)=
\res{d\ul h\wedge\xi}{\ul h}=
\rho_{\ul h}\left(\frac{d\ul h}{h}\wedge\xi\right).
\end{equation}
Then \eqref{34} follows from the exact sequence \eqref{23}.
Conversely let $\omega=\frac\eta h\in\Omega_{R,h}$ satisfy \eqref{34}.
Then $\eta\in\Omega_R$ with 
\[
g dh_j\wedge\eta=\sum_{i=1}^mdh_j\wedge(h\eta_i)\in\sum_{i=1}^mh_i\Omega_R
\]
and hence $dh_j\wedge\eta\in\sum_{i=1}^mh_i\Omega_R$ for $j=1,\dots,m$ since $h_1,\dots,h_m,g$ is an $R$-sequence.
It follows that $\omega\in\Omega_R(\log\ul h)$.
\end{proof}

\begin{rmk}\label{5}\
\begin{asparaenum}[(a)]

\item\label{5a} For $m=1$ the upper and lower sequences in \eqref{23} coincide by Definition~\ref{92}.

\item\label{5b} It follows from \eqref{34} and \eqref{20} that $(\gamma_A^0)^{-1}\circ\rho_{\ul h}$ coincides with Aleksandrov's multiple residue defined as in \eqref{6} (see \cite[\S4]{Ale12}).

\item\label{5c} Aleksandrov claims exactness of the bottom row for any $m$ and surjectivity of $\rho_{\ul h}'$ in \eqref{23} (see \cite[Thm.~2]{Ale12}).
However Pol showed that in general $\rho_{\ul h}'$ is not surjective (see \cite[Prop.~4.14]{Pol15}).

\end{asparaenum}
\end{rmk}

\section{Saito's normal crossing condition}\label{76}

In addition to the hypotheses of \S\ref{32} we shall assume from now on that $k$ is algebraically closed and that $A$ is $r$-equidimensional.
The integral closure of $A$ in $L=Q(A)$,
\begin{equation}\label{48}
\nu_A\colon A\into\tilde A,
\end{equation}
is a finite $k$-algebra homomorphism (see \cite[II.\S7.2]{GR71}),
the \emph{normalization} of $A$.
Denote by $\pp_1,\dots,\pp_s$ the minimal primes of $A$ and set 
\[
A_i:=A/\pp_i,\quad L_i:=Q(A_i).
\]
Then $\dim A_i=r$ by $r$-equidimensionality of $A$.
Since $A$ is reduced,
\begin{equation}\label{85}
\pp_iA_{\pp_i}=0,\quad L_i=A_{\pp_i}.
\end{equation}
For the same reason (see \cite[II.\S7.2]{GR71}),
\begin{equation}\label{29}
A\into\prod_{i=1}^sA_i\into\prod_{i=1}^s\tilde A_i=\tilde A\into\prod_{i=1}^sL_i=L
\end{equation}
where each $\tilde A_i=\wt{A_i}$ is a local analytic $k$-algebra.
Note that $L=Q(\tilde A)$ and $L_i=Q(\tilde A_i)$.
The objects of \S\ref{32a} can be defined verbatim for $\tilde A$ compatible with the product decomposition \eqref{29}.
In particular, $\gamma_{\tilde A}=\bigoplus_{i=1}^s\gamma_{\tilde A_i}$ and
\[
\omega_{\tilde A}=\bigoplus_{i=1}^s\omega_{\tilde A_i},\quad
\sigma_{\tilde A}=\bigoplus_{i=1}^s\sigma_{\tilde A_i}.
\]
For any $\qq\in\Spec\tilde A$ lying over $\pp=A\cap\qq\in\Spec A$, 
\begin{equation}\label{102}
\dim A_\pp=r-\dim A/\pp=r-\dim\tilde A/\qq=\dim\tilde A_{\qq}
\end{equation}
using that $A$ and $\tilde A$ are $r$-equidimensional and catenary (see \cite[Prop.~2.5.10]{Liu02})

\begin{prp}\label{12}
There is a commutative diagram
\begin{equation}\label{30}
\xymat{
\Omega_{\tilde A}\ar[r]^-{c_{\tilde A}} & \omega_{\tilde A}\ar@{^(->}[d] & \sigma_{\tilde A}\ar[l]^-\cong_{\gamma_{\tilde A}^0\vert}\ar@{^(->}[r]\ar@{^(->}[d] & \Omega_{\tilde A}\otimes_{\tilde A}L\\
\Omega_A\ar[r]^-{c_A}\ar[u]_{\bigwedge d\nu_A} & \omega_A & \sigma_A\ar[l]^-\cong_{\gamma_A^0\vert}\ar@{^(->}[r] & \Omega_A\otimes_AL\ar[u]_-\cong
}
\end{equation}
where the horizontal compositions are the canonical maps.
\end{prp}

\begin{proof}
Let \eqref{27} be a Noether normalization of $A$;
composed with \eqref{48} it gives a Noether normalization of $\tilde A$.
Setting $m=0$ in \eqref{10} it serves to compute both $\omega_A$ and $\omega_{\tilde A}$.
Note that $A\otimes_RQ(R)=L=\tilde A\otimes_RQ(R)$ and hence (see \cite[\S10]{SS74})
\begin{equation}\label{84}
\Tr_{\tilde A/R}\vert_A=\Tr_{A/R}.
\end{equation}
There is a natural map of complexes of graded $(\Omega_A,d)$-modules $D_\Omega(\tilde A)\to D_\Omega(A)$.
By \eqref{82} and \eqref{84} it maps $c_{\tilde A}\vert_{\tilde A\otimes_R\Omega_R}\mapsto c_A\vert_{A\otimes_R\Omega_R}$.
Together with the left claimed injectivity in diagram~\eqref{30} this implies that $c_{\tilde A}\mapsto c_A$ (see \cite[(5.1)]{Ker83a}).
The commutativity of diagram~\eqref{30} follows using diagram~\eqref{71}.

The inclusion \eqref{48} has torsion cokernel, so applying $\Hom_R(-,\Omega_R^n)$ first gives 
\begin{equation}\label{8}
\omega_{\tilde A}^r\into\omega_A^r
\end{equation}
due to \eqref{10}.
Consider the short exact sequence (see \cite[Cor.~11.8, Prop.~11.17]{Kun86})
\begin{equation}\label{95}
\xymat{
0\ar[r] & T^1(\tilde A/A)\ar[r] & \tilde A\otimes_A\Omega^1_A\ar[r]^-{\tilde A\otimes d\nu_A} & \Omega^1_{\tilde A}\ar[r] & \Omega^1_{\tilde A/A}\ar[r] & 0.
}
\end{equation}
Applying $\bigwedge^p$ to \eqref{95}, which is right-exact and commutes with base change, \eqref{90} gives a short exact sequence
\begin{equation}\label{96}
\xymat@C=3em{
0\ar[r] & T^p(\tilde A/A)\ar[r] & \tilde A\otimes_A\Omega^p_A\ar[r]^-{\tilde A\otimes\bigwedge^pd\nu_A} & \Omega^p_{\tilde A}\ar[r] & \Omega^p_{\tilde A/A}\ar[r] & 0
}
\end{equation}
where $T^p(\tilde A/A)$ is the image of $T^1(\tilde A/A)\otimes_A\Omega^{p-1}_A$ (see \cite[Prop.~A.2.2]{Eis95}).
Both $\Omega^1_A$ and $\Omega_{\tilde A}^1$ have rank $r$ (see \cite[(4.4)]{SS72}).
By finiteness of $\tilde A$ over $A$, $\Omega^1_{\tilde A/A}$ is the universal differential module which is compatible with localization and hence $\Omega^1_{\tilde A/A}\otimes_{\tilde A} L=0$.
It follows that $T^p(\tilde A/A)$ and $\Omega_{\tilde A/A}^p$ are torsion.
In particular, this gives the right vertical isomorphism in diagram~\eqref{30} and, since $\omega_{\tilde A}^r$ is torsion-free, we have
\begin{equation}\label{63}
\Hom_{\tilde A}(T^p(\tilde A/A),\omega_{\tilde A}^r)=0=\Hom_{\tilde A}(\Omega_{\tilde A/A}^p,\omega_{\tilde A}^r).
\end{equation}
Now \eqref{8} yields the upper inclusion and \eqref{96} and \eqref{63} the lower inclusion in the following diagram
\begin{equation}\label{31}
\xymat{
\omega_{\tilde A}^p\ar@{=}[d]&
\Hom_A(\Omega_A^{r-p},\omega_{\tilde A}^r)\ar@{=}[d]\ar@{^(->}[r]&
\Hom_A(\Omega_A^{r-p},\omega_A^r)\ar@{=}[d]\\
\Hom_{\tilde A}(\Omega_{\tilde A}^{r-p},\omega_{\tilde A}^r)\ar@{^(->}[r]&
\Hom_{\tilde A}(\tilde A\otimes_A\Omega_A^{r-p},\omega_{\tilde A}^r)&
\omega_A^p
}
\end{equation}
which proves injectivity of the vertical maps in diagram~\eqref{30}.
\end{proof}

The following fact stated by Kersken (see \cite[p.6]{Ker84}) goes back to a result of Serre (see \cite[p.~5]{Lip84}).

\begin{prp}\label{39}
If $A$ is normal then $\omega_A$ is a reflexive $A$-module.
\end{prp}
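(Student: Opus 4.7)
The strategy is to deduce reflexivity of each graded component $\omega_A^p$ from good properties of the canonical module $\omega_A^r$ together with the hypothesis $(R_1)$, via the identification $\omega_A^p \cong \Hom_A(\Omega_A^{r-p},\omega_A^r)$ from \eqref{10}.

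The first step I would take is to establish that $\omega_A^r$ satisfies Serre's condition $(S_2)$ as an $A$-module, that is, $\depth_{A_\pp}(\omega_A^r)_\pp \geq \min(2,\dim A_\pp)$ for every $\pp \in \Spec A$. Since $A$ is excellent, reduced, and equidimensional, it admits a dualizing complex, and $\omega_A^r$ is the top non-vanishing cohomology of its normalized form; the $(S_2)$-property then follows by a standard local-duality argument going back to Aoyama. Given this, for any $p$ I would pick a finite free presentation $A^a \to A^b \to \Omega_A^{r-p} \to 0$ and apply $\Hom_A(-,\omega_A^r)$ to obtain the exact sequence $0 \to \omega_A^p \to (\omega_A^r)^b \to (\omega_A^r)^a$. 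As the kernel of a morphism between $(S_2)$-modules is itself $(S_2)$, each $\omega_A^p$ is an $(S_2)$ $A$-module.

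Next, I use $(R_1)$ to obtain local freeness in codimension $\leq 1$: at every prime $\pp$ of $A$ with $\dim A_\pp\leq 1$, the local ring $A_\pp$ is regular, so $(\Omega_A^{r-p})_\pp$ is $A_\pp$-free and $(\omega_A^r)_\pp$ is free of rank $1$. Hence $(\omega_A^p)_\pp$ is finite free over $A_\pp$ and in particular reflexive. I would then conclude by invoking the standard reflexivity criterion: a finitely generated $A$-module that satisfies $(S_2)$ and is locally reflexive at every prime $\pp$ with $\dim A_\pp\leq 1$ is reflexive over $A$.

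The main obstacle will be the first step, the $(S_2)$-property of the canonical module in the non-Cohen--Macaulay case, together with ensuring that the form of the reflexivity criterion used in the final step applies even though $A$ itself need not satisfy $(S_2)$ -- the key point being that the $(S_2)$-hypothesis here is imposed on the module $\omega_A^p$ rather than on the ring.
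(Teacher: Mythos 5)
Your argument is correct and follows essentially the same route as the paper: local freeness (hence reflexivity) of $\omega_A^p$ at primes of dimension $\le 1$ via $(R_1)$, depth $\ge 2$ at primes of dimension $\ge 2$, and the standard depth-theoretic reflexivity criterion \cite[Prop.~1.4.1.(b)]{BH93}. The only real difference is the provenance of the depth bound: where you invoke Aoyama's $(S_2)$-theorem for the canonical module and transfer it through a free presentation, the paper gets the same estimate elementarily from $\omega_A^p\cong\Hom_A(\Omega_A^{r-p},\omega_A^r)$ together with $\omega^r_{A,\qq}\cong\Hom_{R_\pp}(A_\qq,R_\pp/\ideal{f})$ for an $R_\pp$-sequence $f_1,\dots,f_m\in R_\pp\aa$ and the grade inequality for $\Hom$ (and uses Kersken's trace-map isomorphism at regular points for the codimension-$\le1$ case).
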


\begin{proof}
By Serre's criterion, normality of $A$ is equivalent to conditions $(R_1)$ and $(S_2)$.
Let \eqref{27} be a presentation $R\onto A$ with kernel $\aa$ and let $\qq\in\Spec A$.

First assume that $\depth A_\qq\le 1$.
Then $\dim A_\qq\le 1$ by $(S_2)$ and $A_\qq$ is regular by $(R_1)$.
It follows that \eqref{11} induces an isomorphism $\omega_{A,\qq}\cong\Omega_{A,\qq}$ and that $\Omega_{A,\qq}=\bigwedge\Omega_{A,\qq}^1$ is free (see \cite[(5.7.3)]{Ker83b} and \cite[(8.7)]{SS72}).
In particular, $\omega_{A,\qq}$ is reflexive in this case.

Then assume that $\depth A_\qq\ge 2$ and let $\pp\in\Spec R$ be the preimage of $\qq$.
Since $R$ is Cohen--Macaulay, $\grade(\aa,R)=m$ (see \cite[Thm.~2.1.2.(b)]{BH93}) and there is an $R$-sequence $\ul f=f_1,\dots,f_m\in\aa$.
Then $R_\pp/\ideal{\ul f}\onto A_\pp=A_\qq$ and since $R_\pp$ and hence $R_\pp/\ideal{\ul f}$ is Cohen--Macaulay (see \cite[Thm.~2.1.3.(a)]{BH93})
\[
\grade(\pp,R_\pp/\ideal{\ul f})=\dim(R_\pp/\ideal{\ul f})\ge\dim A_{\qq}\ge\depth A_\qq\ge2.
\]
Using $\Omega_A^0=A$ and $\Omega_R^n\cong R$ in \eqref{10}, $\omega^r_A\cong\Hom_R(A,R/\ideal{\ul f})$ (see \cite[Lem.~1.2.4]{BH93}).
It follows that (see \cite[Ex.~1.4.19]{BH93})
\[
\depth\omega^r_{A,\qq}=\grade(\qq,\omega^r_{A,\qq})=\grade(\pp,\Hom_{R_\pp}(A_\qq,R_\pp/\ideal{\ul f}))\ge2.
\]
Thus, reflexivity of $\omega_A^r$ and then of $\omega_A^p$ for all $p$ follows (see \cite[Prop.~1.4.1.(b)]{BH93}).
\end{proof}

\begin{cor}\label{45}
If $A$ is normal then $\sigma_A^0=\Omega_A^0=A$.
\end{cor}

\begin{proof}
Using \eqref{55} and \eqref{33} it suffices to show that $N_A^0=0$.
By hypothesis, $A$ satisfies Serre's conditions $(R_1)$ and $(S_2)$.
By $(R_1)$, $N_A^0$ has support of codimension at least $2$ (see \cite[(5.7.3)]{Ker83b}).
Let $\qq\in\Spec A$ with $\dim A_\qq\ge2$.
By $(S_2)$ and Proposition~\ref{39}, both $A_\qq$ and $\omega_{A,\qq}^0$ have depth at least $2$ (see \cite[Prop.~1.4.1.(b).(ii)]{BH93}).
Then $\depth N_{A,\qq}^0\ge1$ by the Depth Lemma (see \cite[Prop.~1.2.9]{BH93}) and hence $\qq\not\in\Ass N_A^0$.
Thus, $\Ass N_A^0=\emptyset$ and $N_A^0=0$ as claimed.
\end{proof}

In the hypersurface case, the inclusion $\omega_{\tilde A}^0\into\omega_A^0$ in diagram~\eqref{30} corresponds to the inclusion~\eqref{13} using Corollary \ref{45}.
This motivates the following 

\begin{dfn}\label{80}
We say that $A$ satisfies \emph{Saito's normal crossing condition (SNCC)} if $\omega_{\tilde A}^0=\omega_A^0$.
By \emph{SNCC at} $\pp\in\Spec A$ we mean that $\omega_{\tilde A,\pp}^0=\omega_{A,\pp}^0$.
\end{dfn}

We first note that SNCC is a codimension-one condition.

\begin{prp}\label{66}
The equality $\omega_{\tilde A}^p=\omega_A^p$ holds true if and only if it holds true in codimension one.
In particular, SNCC is a codimension-one condition.
\end{prp}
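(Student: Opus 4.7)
The implication $(\Rightarrow)$ is trivial; for $(\Leftarrow)$, the plan is to reduce to showing that the cokernel of the inclusion \eqref{96} has no associated primes, via the Depth Lemma. Concretely, I would form the short exact sequence of finitely generated $A$-modules
\[
0\to\omega_{\tilde A}^p\to\omega_A^p\to C\to0,
\]
where $\omega_{\tilde A}^p$ is finitely generated over $A$ by finiteness of the normalization. The hypothesis translates into the containment of the support of $C$ in $\{\pp\in\Spec A:\dim A_\pp\geq 2\}$, so it remains to rule out embedded primes at codimension $\geq 2$ points.

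The first key depth estimate is $\depth\omega_{A,\pp}^p\geq 2$ at every $\pp$ with $\dim A_\pp\geq 2$. I would establish this by reusing the proof of Proposition~\ref{39}: its computation \eqref{38} relies only on the Cohen--Macaulayness of $R$ together with the bound $\dim A_\pp\geq 2$ and does \emph{not} invoke $(R_1)$, so it gives $\depth\omega_{A,\pp}^r\geq 2$. This bound then propagates to all $\omega_A^p$ through \eqref{10} and \cite[Prop.~1.4.1.(b), Ex.~1.4.19]{BH93}.

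For the other module, normality of $\tilde A$ supplies both $(R_1)$ and $(S_2)$, so Proposition~\ref{39} yields reflexivity of $\omega_{\tilde A}^p$ as an $\tilde A$-module and hence $\depth_{\tilde A_\qq}\omega_{\tilde A,\qq}^p\geq 2$ at every $\qq\in\Spec\tilde A$ of codimension $\geq 2$. Since the finite extension $A\into\tilde A$ preserves depth locally at each $\pp\in\Spec A$ and every prime $\qq$ over $\pp$ satisfies $\dim\tilde A_\qq=\dim A_\pp$, this transfers to $\depth_{A_\pp}\omega_{\tilde A,\pp}^p\geq 2$. The Depth Lemma \cite[Prop.~1.2.9]{BH93} applied to the short exact sequence then yields $\depth C_\pp\geq 1$ for every $\pp$ in the support of $C$, forcing $\Ass C=\emptyset$ and hence $C=0$.

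The main obstacle is the bookkeeping in the second paragraph: one must confirm that the argument producing \eqref{38} really does not secretly depend on the $(R_1)$ hypothesis used elsewhere in Proposition~\ref{39}. Once that is verified, the remainder is the standard $(S_2)$-extension argument already employed in the proof of Corollary~\ref{45}.
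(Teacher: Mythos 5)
Your proposal is correct and is essentially the paper's own argument: the same short exact sequence from Proposition~\ref{12}, the observation that the depth estimate \eqref{38} does not depend on $(R_1)$, and the Depth Lemma to exclude associated primes of the cokernel at points of codimension at least two. The only cosmetic difference is that the paper implicitly gets the depth bound for $\omega_{\tilde A}^p$ by applying \eqref{38} to $\tilde A$ itself (via the common Noether normalization), whereas you route it through Proposition~\ref{39} and depth transfer along the finite map $A\into\tilde A$; both are instances of the Corollary~\ref{45} strategy the paper explicitly says it is following.
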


\begin{proof}
Assume that the inclusion $\omega_{\tilde A}^p\into\omega_A^p$ in diagram~\eqref{30} is an equality at primes of codimension $1$; denote by $W_A^p$ its cokernel.
Since $W_A^p$ is torsion, $W_A^p$ has support of codimension at least $2$.
Let $\pp\in\Spec A$ with $\dim A_\pp\ge2$ and pick any $\qq\in V(\pp\tilde A)\subseteq\Spec\tilde A$.
In particular, $\qq\cap A\supseteq\pp$ and hence $\dim\tilde A_\qq=\dim A_{\qq\cap A}\ge\dim A_\pp\ge2$ using \eqref{102}. 
By Serre's condition $(S_2)$ for $\tilde A$ then also $\depth\tilde A_\qq\ge2$.
Thus, $\depth\omega_{\tilde A,\qq}^p\ge 2$ by Proposition~\ref{39} (see \cite[Prop.~1.4.1.(b).(ii)]{BH93}).
It follows that (see \cite[IV.B.1.Prop.~12]{Ser65} and \cite[Prop.~1.2.10.(a)]{BH93})
\[
\depth\omega_{\tilde A,\pp}^p
=\grade(\pp,\omega_{\tilde A,\pp}^p)
=\grade(\pp\tilde A,\omega_{\tilde A,\pp}^p)
=\min\{\depth\omega_{\tilde A,\qq}^p\mid\qq\in V(\pp\tilde A)\}\ge 2.
\]
Since $\depth\omega_{A,\pp}^p\ge 1$ by diagram~\eqref{71}, the claim follows as in the proof of Corollary \ref{45}.
\end{proof}

Now we show that SNCC descends to any union of irreducible components.
For any subset $I\subseteq\{1,\dots,s\}$, set
\begin{equation}\label{89}
A_I:=A/\aa_I,\quad\aa_I:=\bigcap_{i\in I}\pp_i.
\end{equation}
Note that $A_I$ is reduced with minimal primes $\pp_i/\aa_I$, $i\in I$.

\begin{prp}\label{51}
If $\omega_{\tilde A}^p=\omega_A^p$ then $\omega_{\tilde A_I}^p=\omega_{A_I}^p$.
In particular, SNCC descends from $A$ to $A_I$ for any subset $I\subseteq\{1,\dots,s\}$.
\end{prp}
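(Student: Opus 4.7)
The plan is to identify $\omega_{A_I}^p$ and $\omega_{\tilde{A_I}}^p$ as the $\aa_I$-torsion submodules of $\omega_A^p$ and $\omega_{\tilde A}^p$ respectively, and then exploit that, under the hypothesis, the $\tilde A$-module structure on $\omega_{\tilde A}^p$ collapses these two torsion notions.

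First I would observe that $A_I=A/\aa_I$ with $\aa_I=\bigcap_{i\in I}\pp_i$ has minimal primes $\{\pp_i:i\in I\}$, so $A_I$ is reduced and equidimensional of the same dimension $r$ as $A$. Proposition~\ref{12} applied to $A_I$ yields the inclusion $\omega_{\tilde{A_I}}^p\into\omega_{A_I}^p$, and only the reverse inclusion remains.

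Next, I would construct a natural $A$-linear inclusion $\omega_{A_I}^p\into\omega_A^p$ using the residue-symbol description \eqref{4}--\eqref{16}: an element of $\omega_{A_I}^p$ is represented by a residue symbol whose defining vanishing conditions are taken modulo $\ker(R\to A_I)\supseteq\aa$, hence strengthen the conditions for $\omega_A^p$ (a common $R$-sequence $f_1,\dots,f_m\in\aa$ may be used by Wiebe's theorem). The image is automatically contained in the $\aa_I$-torsion $(0:_{\omega_A^p}\aa_I)$, since $\omega_{A_I}^p$ is an $A/\aa_I$-module. The same reasoning applied to $\tilde A\onto\tilde{A_I}$, whose kernel is $\aa_I\tilde A$ by \eqref{55}, produces an inclusion $\omega_{\tilde{A_I}}^p\into\omega_{\tilde A}^p$; here the splitting $\tilde A=\prod_j\tilde A_j$ upgrades this to an equality $\omega_{\tilde{A_I}}^p=(0:_{\omega_{\tilde A}^p}\aa_I\tilde A)$, because \eqref{10} and the product give $\omega_{\tilde A}^p=\prod_j\omega_{\tilde A_j}^p$ while $\aa_I\tilde A=\prod_{j\notin I}\tilde A_j$ visibly cuts out the $I$-components.

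Finally, under the hypothesis $\omega_A^p=\omega_{\tilde A}^p$ inside $L\otimes_A\Omega_A^p$, the common module carries the $\tilde A$-action inherited from $\omega_{\tilde A}^p$, and every element annihilated by $\aa_I$ is automatically annihilated by $\aa_I\tilde A$. This yields
\[
\omega_{A_I}^p\subseteq(0:_{\omega_A^p}\aa_I)=(0:_{\omega_{\tilde A}^p}\aa_I\tilde A)=\omega_{\tilde{A_I}}^p,
\]
which is the desired reverse inclusion.

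The hardest part will be verifying that the residue-symbol inclusion $\omega_{A_I}^p\into\omega_A^p$ is compatible, via the identification supplied by the hypothesis, with the inclusion $\omega_{\tilde{A_I}}^p\into\omega_{\tilde A}^p$ coming from the product decomposition of $\tilde A$; only then do the four modules live in a common ambient module where the two torsion characterizations can be juxtaposed. This compatibility is a naturality statement for Kersken's residue complex $D_\Omega$ that should follow from its functoriality as developed in \cite[\S3]{Ker83b}.
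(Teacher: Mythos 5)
Your overall strategy---realizing $\omega_{A_I}^p$ and $\omega_{\tilde A_I}^p$ as the $\aa_I$-annihilator submodules $\Hom_A(A_I,-)$ of $\omega_A^p$ and $\omega_{\tilde A}^p$ and then applying the hypothesis---is exactly the idea behind the paper's proof. But the step you yourself flag as ``the hardest part'' is a genuine gap, not a routine citation of functoriality. You embed $\omega_{A_I}^p$ into $\omega_A^p$ by one mechanism (residue symbols, where the reduction to denominators $f_1,\dots,f_m\in\aa$ needs the transition formula \cite[(2.5.3)]{Ker83a} together with the existence of a length-$m$ $R$-sequence in $\aa\cap\ideal{f_1,\dots,f_m}$, not just ``Wiebe''), and you embed $\omega_{\tilde A_I}^p$ into $\omega_{\tilde A}^p$ by a different mechanism (the product decomposition of $\tilde A$), and then you compare images inside the common module supplied by the hypothesis. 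Unless these two embeddings are shown to intertwine the natural inclusions of Proposition~\ref{12} for $A_I$ and \eqref{96} for $A$, your final containment only produces an abstract $A$-linear injection $\omega_{A_I}^p\into\omega_{\tilde A_I}^p$; combined with Proposition~\ref{12} this gives mutual injections of finitely generated modules, which does not imply that the natural map $\omega_{\tilde A_I}^p\into\omega_{A_I}^p$ is surjective (think of $A\into\mm_A$ and $\mm_A\into A$). Since SNC is by definition an equality along the natural inclusion, the proof is incomplete at its decisive point.

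The paper closes precisely this hole by phrasing everything through \eqref{10}: Lemma~\ref{50} gives $\omega_{A_I}^p=\Hom_A(\Omega_{A_I}^{r-p},\omega_A^r)$, and Lemma~\ref{52} (the kernel of $A_I\otimes_A\Omega_A^{r-p}\onto\Omega_{A_I}^{r-p}$ is torsion), together with torsion-freeness of $\omega_A^r$ and Hom-tensor adjunction, yields $\Hom_A(A_I,\omega_A^p)=\omega_{A_I}^p$ and likewise $\Hom_A(A_I,\omega_{\tilde A}^p)=\omega_{\tilde A_I}^p$; because the inclusion \eqref{96} is itself constructed from these same Hom-modules in \eqref{31}, applying $\Hom_A(A_I,-)$ to it \emph{is} the natural map for $A_I$, so the compatibility you defer is automatic there. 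If you prefer to keep your two concrete embeddings, you must actually verify the commutativity of the square relating them, e.g.\ by noting that the difference of the two composites is an $A$-linear map from the $A_I$-module $\omega_{\tilde A_I}^p$ into the torsion-free module $\omega_A^p$, hence vanishes if and only if it vanishes after localizing at each $\pp_i$, $i\in I$, where all maps reduce to the canonical identifications via $\gamma^0$ in \eqref{91}; some such argument needs to be supplied before the chain of equalities proves the proposition.
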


\begin{rmk}
Proposition~\ref{51} plays the role of the inclusion 
\[
\Omega^1(\log(D_1+D_2))\subseteq\Omega^1(\log D)
\]
for irreducible components $D_1$ and $D_2$ of a hypersurface $D$ used in \cite[Ex.~3.3]{GS14}.
\end{rmk}

The proof of Proposition~\ref{51} relies on the following two lemmas.

\begin{lem}\label{50}
For any subset $I\subseteq\{1,\dots,s\}$, we have $\omega_{A_I}^p=\Hom_A(\Omega_{A_I}^{r-p},\omega_A^r)$.
\end{lem}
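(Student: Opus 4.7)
The plan is to show that the Noether normalization $R\hookrightarrow A$ fixed in Section~\ref{76} restricts to a Noether normalization of $A_I$, and then reduce the claim to formula~\eqref{10} and a standard Hom-tensor adjunction.

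First I would verify that $R\hookrightarrow A_I$ is a Noether normalization. The composition $R\hookrightarrow A\onto A_I$ is finite by composition. For injectivity, note that $A$ is reduced and equidimensional of dimension $r=\dim R$, so each minimal prime $\pp_i$ of $A$ satisfies $\dim(A/\pp_i)=r$; since $A/\pp_i$ is a finite integral extension of $R/(\pp_i\cap R)$ and $R$ is a regular domain of dimension $r$, the contraction $\pp_i\cap R$ is forced to be zero. Hence $\aa_I\cap R\subseteq\pp_i\cap R=0$ for any $i\in I$, and $R\hookrightarrow A_I$ is a Noether normalization. In particular, the codimension $m=n-r$ vanishes for both $A$ and $A_I$.

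With $m=0$, formula~\eqref{10} applied to $A_I$ reads
\[
\omega_{A_I}^p=\Hom_R(\Omega_{A_I}^{r-p},\Omega_R^r),
\]
and applied to $A$ at $p=r$ gives $\omega_A^r=\Hom_R(A,\Omega_R^r)$. The lemma then follows from Hom-tensor adjunction for restriction of scalars along $R\hookrightarrow A$: viewing $\Omega_{A_I}^{r-p}$ as an $A$-module via $A\onto A_I$, one has
\[
\Hom_R(\Omega_{A_I}^{r-p},\Omega_R^r)=\Hom_A\bigl(\Omega_{A_I}^{r-p},\Hom_R(A,\Omega_R^r)\bigr)=\Hom_A(\Omega_{A_I}^{r-p},\omega_A^r).
\]

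I do not foresee any serious obstacle; the only subtle point is descending the Noether normalization from $A$ to $A_I$, after which the codimensions agree and the statement reduces to a formal adjunction.
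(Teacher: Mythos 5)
Your proposal is correct and follows essentially the same route as the paper: compose the Noether normalization $R\to A$ with $A\onto A_I$ to compute $\omega_{A_I}$ via \eqref{10}, then apply Hom-tensor adjunction along $R\to A\to A_I$ (the paper merely factors the adjunction through the intermediate identity $\omega_{A_I}^r=\Hom_A(A_I,\omega_A^r)$, whereas you apply it directly in degree $p$). Your extra check that $R\to A_I$ is injective is correct but not needed, since the construction of $\omega_{A_I}$ only requires a finite map of the right codimension, which equidimensionality of $A$ guarantees.
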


\begin{proof}
Let \eqref{27} be a Noether normalization of $A$; composed with $A\onto A_I$ it gives a Noether normalization of $A_I$.
Using \eqref{10} and Hom-tensor-adjunction, we compute that
\[
\omega_{A_I}^r
=\Hom_R(A_I,\omega_R^r)
=\Hom_A(A_I,\Hom_R(A,\omega_R^r))
=\Hom_A(A_I,\omega_A^r)
\]
and hence that
\[
\omega_{A_I}^p
=\Hom_{A_I}(\Omega_{A_I}^{r-p},\omega_{A_I}^r)
=\Hom_{A_I}(\Omega_{A_I}^{r-p},\Hom_A(A_I,\omega_A^r))
=\Hom_A(\Omega_{A_I}^{r-p},\omega_A^r).\qedhere
\]
\end{proof}

Replacing $A$ in \eqref{89} by $\tilde A$, $\tilde\pp_j=\prod_{i\ne j}\tilde A_i$, $j=1,\dots,s$, are the minimal primes, $\tilde\aa_I=\prod_{i\not\in I}\tilde A_i$ and 
\[
\tilde A_I=\tilde A/\tilde\aa_I=\prod_{i\in I}\tilde A_i=\wt{A_I}.
\]

\begin{lem}\label{52}
The natural surjections $A_I\otimes_A\Omega_A^p\onto\Omega_{A_I}^p$ and $A_I\otimes_A\Omega_{\tilde A}^p\onto\Omega_{\tilde A_I}^p$ have torsion kernels $T^p(A_I/A)$ and $\tilde T^p(A_I/A)$, respectively.
\end{lem}

\begin{proof}
By definition, $T^0(A_I/A)=0$ and $\tilde T^0(A_I/A)$ is torsion by \eqref{85}.
In particular,
\begin{equation}\label{101}
A_I\otimes_A\Omega_{\tilde A}^p\onto\tilde A_I\otimes_{\tilde A}\Omega_{\tilde A}^p
\end{equation}
has torsion a kernel.
By \eqref{85}, $\aa_I/\aa_I^2$ is torsion and surjects onto $T^1(A_I/A)$ (see \cite[Cor.~11.10]{Kun86}).
Therefore $T^p(A_I/A)$ is torsion for all $p\ge1$ (see the proof of Proposition~\ref{12}).
Replacing $A$ by $\tilde A$ also $T^p(\tilde A_I/\tilde A)$ is torsion for all $p\ge1$.
By the Snake Lemma applied to 
\[
\xymat{
0\ar[r] & \tilde T^p(A_I/A)\ar[r]\ar[d] & A_I\otimes_A\Omega_{\tilde A}^p\ar[r]\ar@{->>}[d] & \Omega_{\tilde A_I}^p\ar[r]\ar@{=}[d] & 0\\
0\ar[r] & T^p(\tilde A_I/\tilde A)\ar[r] & \tilde A_I\otimes_{\tilde A}\Omega_{\tilde A}^p\ar[r] & \Omega_{\tilde A_I}^p\ar[r] & 0,
}
\]
$\tilde T^p(A_I/A)$ is an extension of the torsion kernel of \eqref{101} and $T^p(\tilde A_I/\tilde A)$.
\end{proof}

\begin{proof}[Proof of Proposition~\ref{51}]
Using \eqref{10}, Hom-tensor-adjunction, torsion-freeness of $\omega_A^r$, Lemmas~\ref{52} and \ref{50}, we compute
\begin{align}\label{99}
\Hom_A(A_I,\omega_A^p)
&=\Hom_A(A_I,\Hom_A(\Omega_A^{r-p},\omega_A^r))\\
\nonumber&=\Hom_A(A_I\otimes_A\Omega_A^{r-p},\omega_A^r)\\
\nonumber&=\Hom_A(\Omega_{A_I}^{r-p},\omega_A^r)
=\omega_{A_I}^p
\end{align}
and similarly $\Hom_A(A_I,\omega_{\tilde A}^p)=\omega_{\tilde A_I}^p$.
Thus, $\Hom_A(A_I,-)$ applied to the inclusion $\omega_{\tilde A}^p\into\omega_A^p$ in diagram~\eqref{30} yields the corresponding with $A$ replaced by $A_I$.
The claim follows.
\end{proof}

Finally, we show that SNCC is compatible with analytic triviality.

\begin{prp}\label{3}
Assume that $A=A'\hat\otimes R''$ where $A'$ satisfies the hypotheses on $A$, $\dim A'=r-1$ and $R''=k\llangle x\rrangle$ is regular.
Then $\omega_A^0=\omega_{A'}^0\hat\otimes R''$.
In particular, $A$ satisfies SNCC if and only if $A'$ does.
\end{prp}

\begin{proof}
Let \eqref{27} for $A'$ be a Noether normalization
\begin{equation}\label{86}
R'=k\llangle x_1,\dots,x_{r-1}\rrangle\into A'.
\end{equation}
A Noether normalization and a normalization of $A$ can be obtained by applying $-\hat\otimes R''$ to \eqref{86} and to \eqref{48} for $A'$ (see \cite[III.\S5]{GR71}), that is,
\[
R=R'\hat\otimes R''\into A=A'\hat\otimes R''\into\tilde A=\tilde A'\hat\otimes R''.
\]
This leads to decompositions (see \cite[III.\S5.10]{GR71})
\[
\Omega_R^r=\Omega_{R'}^{r-1}\hat\otimes\Omega_{R''}^1,\quad \Omega_A^r=\Omega_{A'}^r\hat\otimes R''\oplus\Omega_{A'}^{r-1}\hat\otimes\Omega_{R''}^1,
\]
where $\Omega_{R''}^1$ and $\Omega_R^r$ are free of rank $1$ and $\Omega_{A'}^r=\bigwedge^r\Omega_{A'}^1$ and hence $\Omega_{A'}^r\hat\otimes R''$ is torsion since $\rk\Omega_{A'}^1=\dim A'=r-1$ (see \cite[(8.8)]{SS72}).
Note that the analytic tensor products over $R',R''$ and over $A',R''$ coincide due to finiteness of $A'$ over $R'$ (see \cite[III.\S5.10]{GR71}).
Using \eqref{10} and flatness of $R'\to R$, we deduce
\begin{align*}
\omega_A^0&=\Hom_R(\Omega_A^r,\Omega_R^r)\\
&=\Hom_{R'\hat\otimes R''}(\Omega_{A'}^{r-1}\hat\otimes\Omega_{R''}^1,\Omega_{R'}^{r-1}\hat\otimes\Omega_{R''}^1)\\
&=\Hom_{R'\otimes_{R'} R\otimes_{R''}R''}(\Omega_{A'}^{r-1}\otimes_{R'}R\otimes_{R''}\Omega_{R''}^1,\Omega_{R'}^{r-1}\otimes_{R'}R\otimes_{R''}\Omega_{R''}^1)\\
&=\Hom_{R'\otimes_{R'} R}(\Omega_{A'}^{r-1}\otimes_{R'}R,\Omega_{R'}^{r-1}\otimes_{R'}R)\\
&=\Hom_{R'}(\Omega_{A'}^{r-1},\Omega_{R'}^{r-1}\otimes_{R'}R)\\
&=\Hom_{R'}(\Omega_{A'}^{r-1},\Omega_{R'}^{r-1})\otimes_{R'}R\otimes_{R''}R''\\
&=\omega_{A'}^0\hat\otimes R''
\end{align*}
and similarly $\omega_{\tilde A}^0=\omega_{\tilde A'}^0\hat\otimes R''$.
It follows that the inclusions $\omega_{\tilde A'}^0\into\omega_{A'}^0$ and $\omega_{\tilde A}^0\into\omega_A^0$ correspond via $-\hat\otimes R''$ and $-\otimes_{R''}k$.
\end{proof}

\section{Fractional ideals and ramification}\label{73}

Our approach to SNCC in case of curve and Gorenstein singularities uses that the inclusion $\omega_{\tilde A}^r\into\omega_A^r$ is given by the conductor ideal (see \eqref{41} and Lemma~\ref{49} below).
With the latter we recall the basics on fractional ideals.

\begin{dfn}
A (regular) \emph{fractional ideal} of $A$ is an $A$-submodule $M$ of $L=Q(A)$ such that there exist $a,b\in A^\reg$ with $aM\subseteq A$ and $b\in M$.
\end{dfn}

Since $A$ is Noetherian the first condition is equivalent to $M$ being finitely generated.
For any two fractional ideals $M,N\subset L$ of $A$ one can identify
\[
\Hom_A(M,N)= N:_LM\subseteq L,\quad \varphi\mapsto \frac{\varphi(m)}{m},\quad m\in M\cap A^\reg,
\]
with a fractional ideal of $A$.
The functor $\Hom_A(-,-)$ is inclusion-reversing (inclusion-preserving) in the first (second) argument on fractional ideals of $A$.
In particular, the dualizing operation
\[
-^{-1}:=\Hom(-,A)
\]
is inclusion-reversing on fractional ideals of $A$.
By \eqref{29}, $Q(A)_\pp=Q(A_\pp)$ and localization at $\pp\in\Spec A$ turns fractional ideals of $A$ into fractional ideals of $A_\pp$.
The localization of \eqref{48} at $\pp\in\Spec A$ is the normalization
\[
\nu_{A,\pp}\colon A_\pp\into\tilde A_\pp=\wt{A_\pp}
\]
of $A_\pp$ (see \cite[Prop.~2.1.6]{HS06}).
If $M$ is a fractional ideal of $A$ then
\[
\End_A(M)\subseteq\tilde A
\]
by the determinantal trick (see \cite[Lem.~2.1.8]{HS06}).
The \emph{conductor (ideal)}
\begin{equation}\label{37}
C_{\tilde A/A}:=\ann_A(\tilde A/A)=\tilde A^{-1}
\end{equation}
is the largest ideal of $A$ which is also an ideal of $\tilde A$.
Multiplying the denominators of a (finite) set of $A$-module generators of $\tilde A$ yields an element $b\in A^\reg\cap C_{\tilde A/A}$ showing that $C_{\tilde A/A}$ is a fractional ideal of $A$.

Both in case of curve and Gorenstein singularities the normalization will be unramified as a consequence of SNCC (see Propositions~\ref{15} and \ref{67} below).
Denote by $F^i_A(M)$ the $i$th \emph{Fitting ideal} of an $A$-module $M$.
Then the \emph{ramification ideal} of the normalization \eqref{48} is defined by
\[
I_{\tilde A/A}:=F^0_{\tilde A}(\Omega^1_{\tilde A/A}).
\]

\begin{lem}\label{44}
For any $\pp\in\Spec A$,
\[
(C_{\tilde A/A})_\pp=C_{\tilde A_\pp/A_\pp},\quad
(\Omega^1_{\tilde A/A})_\pp=\Omega^1_{\tilde A_\pp/A_\pp},\quad
(I_{\tilde A/A})_\pp=I_{\tilde A_\pp/A_\pp},
\]
and following statements are equivalent:
\begin{enumerate}[(a)]
\item $\tilde A_\pp$ is unramified over $A_\pp$.
\item $\Omega^1_{\tilde A_\pp/A_\pp}=0$.
\item $I_{\tilde A_\pp/A_\pp}=\tilde A_\pp$.
\end{enumerate}
In particular, $\Omega^1_{\tilde A/A}=0$ if and only if $A_i=\tilde A_i$ for $i=1,\dots,s$.
\end{lem}

\begin{proof}
By finiteness of $\tilde A$ over $A$, the conductor \eqref{37} commutes with flat base change and $\Omega^1_{\tilde A/A}$ is the universal differential module which commutes with base change.
Fitting ideals commute with flat base change.
The first claim and the equivalences follow (see \cite[Prop.~6.8]{Kun86}).
In particular, $\Omega^1_{\tilde A/A}=0$ if and only if $\tilde A$ is unramified over $A$.
Since $k=\ol k$, this is equivalent to 
\[
A_i/\mm_{A_i}=\tilde A_i/\mm_{\tilde A_i}=\tilde A_i/\mm_A\tilde A_i=\tilde A_i/\mm_{A_i}\tilde A_i
\]
and hence to $A_i=\tilde A_i$ for $i=1,\dots,s$ by Nakayama's Lemma.
\end{proof}

\section{Curve singularities}\label{40}

Keeping all hypotheses of \S\ref{76}, we assume in addition that $r=\dim A=1$.
Informally we refer to $A$ as a \emph{curve (singularity)} with \emph{branches} $A_1,\dots,A_s$ and we call it \emph{plane} if
\[
\edim A:=\dim_k(\mm_A/\mm_A^2)\le 2.
\]
By Serre's normality criterion, the $\tilde A_i$ in \eqref{29} are regular and hence (see \cite[II.\S5.3]{GR71})
\[
\tilde A_i=k\llangle t_i\rrangle.
\]
We denote by $e_1,\dots,e_s\in\tilde A$ the primitive idempotents with $\tilde Ae_i=\tilde A_i$.

For curve singularities we characterize SNCC numerically in terms of the De~Rham cohomology of $\omega_A$ and the \emph{$\delta$-invariant} of $A$
\[
\delta_A:=\dim_k(\tilde A/A).
\]

\begin{prp}
If $A$ is a curve singularity then
\[
\dim_kH^1(\omega_A)\le\delta_A
\]
with equality equivalent to SNCC.
\end{prp}

\begin{proof}
We set $\lambda_A:=\dim_k N_A^0$ (see \eqref{55}).
Then (see \cite[(4.5) Satz]{Ker84}),
\[
\dim_kH^1(\omega_A)=\mu_A-\lambda_A+s-1.
\]
Using Milnor's formula $\mu_A=2\delta_A-s+1$ (see \cite[Prop.~1.2.1.1)]{BG80}) this gives
\[
\dim_kH^1(\omega_A)=2\delta_A-\lambda_A.
\]
By Corollary~\ref{45}, the degree-$0$ part of the leftmost square in diagram~\eqref{30} reads
\[
\xymat{
\tilde A\ar@{=}[r]^-{c_{\tilde A}^0} & \omega_{\tilde A}^0\ar@{^(->}[d]\\
A\ar@{^(->}[r]^-{c_A^0}\ar@{^(->}[u] & \omega_A^0.
}
\]
Thus, $\lambda_A=\delta_A+\dim_k(\omega_A^0/\omega_{\tilde A}^0)$ and the claim follows.
\end{proof}

Our goal is to show that the only curve singularities satisfying SNCC are plane normal crossing.
For convenience we extend this notion as follows.
Denote the fiber product of the $\tilde A_i$ over $k$ by
\[
A\into\tilde A':=\tilde A_1\times_k\cdots\times_k\tilde A_s\into\tilde A.
\]

\begin{dfn}\label{59}
We call a curve singularity $A$ \emph{normal crossing} if $A=\tilde A'$.
\end{dfn}

If $A$ is normal crossing then $\mm_A=\mm_{\tilde A}$, $A_i=\tilde A_i$ for $i=1,\dots,s$, $\edim A=s$ and
\begin{equation}\label{43}
C_{\tilde A/A}=
\begin{cases}
A, & \text{if }s=1,\\
\mm_A=\mm_{\tilde A}, & \text{if }s\ge2.
\end{cases}
\end{equation}

We will first investigate the Gorenstein property of normal crossing curve singularities using the well-known results collected in the following lemma.
The statement on regularity goes back to Jacobinski in far greater generality (see \cite{Jac71}).

\begin{lem}\label{58}\ 
\begin{enumerate}[(a)]
\item\label{58a} $A\subseteq\mm_A^{-1}$ and, unless $A$ is regular, $\mm_A^{-1}\subseteq\tilde A$.
\item\label{58b} $A$ is Gorenstein if and only if $\dim_k(\mm_A^{-1}/A)=1$.
\end{enumerate}
\end{lem}

\begin{proof}\
\begin{asparaenum}[(a)]

\item If $\mm_A^{-1}\subsetneq\End_A(\mm_A)$ then there is a surjection $\mm_A\onto A$.
Since $A$ is projective it splits and hence $\mm_A=xA\oplus I$ for some $x\in A^\reg$
Then $xI\subseteq xA\cap I=0$ implies $I=0$.
It follows that $\mm_A=\ideal{x}$ and $A$ is regular.

\item Any $x\in\mm_A\cap A^\reg$ induces an isomorphism
\[
\xymat{
\Ext_A^1(k,A)\cong\Hom_A(k,A/xA)\cong(xA:_A\mm_A)/xA&\ar[l]^-\cong_-{\cdot x}\mm_A^{-1}/A.
}\qedhere
\]

\end{asparaenum}
\end{proof}

\begin{prp}\label{64}
A normal crossing curve singularity is Gorenstein if and only if it is plane.
\end{prp}

\begin{proof}
We may assume that $A$ is singular, that is, $s\ge2$.
By \eqref{43} and Lemma~\ref{58}.\eqref{58a}, $\mm_A^{-1}=\tilde A$ and hence
\[
\mm_A^{-1}/A\cong(\tilde A/\mm_{\tilde A})/(A/\mm_A)\cong k^s/k\cong k^{s-1}.
\]
By Lemma~\ref{58}.\eqref{58b}, $A$ is therefore Gorenstein if and only if $\edim A=s\le 2$.
\end{proof}

We now give a characterization of SNCC for curve singularities.
The proof relies on the identity (see \cite[Lem.~3.2]{KW84})
\begin{equation}\label{41}
\omega^r_{\tilde A}=C_{\tilde A/A}\omega^r_A.
\end{equation}
We abbreviate $\Der:=\Der_k$ to denote $k$-linear derivations.

\begin{prp}\label{15}
A curve singularity $A$ satisfies SNCC if and only if 
\begin{enumerate}[(a)]
\item\label{15a} $A$ has regular branches, that is, $A_i=\tilde A_i$ for $i=1,\dots,s$, and 
\item\label{15b} any $k$-derivation $A\to\omega^1_A$ factors through $\omega^1_{\tilde A}$, or equivalently,
\[
\Der(A)=\Der(A,C_{\tilde A/A})
\]
in case $A$ is Gorenstein.
\end{enumerate}
If $A$ is Gorenstein and singular then \eqref{15b} holds true if
\begin{equation}\label{21}
C_{\tilde A/A}=\mm_A
\end{equation}
and conversely \eqref{15b} implies \eqref{21} if in addition $A$ is quasihomogeneous.
\end{prp}

\begin{proof}
Recall from the proof of Proposition~\ref{12} that $T^1(\tilde A/A)$ and $\Omega_{\tilde A/A}^1$ in \eqref{95} are torsion.
So dualizing the short exact sequence
\[
0\to(\tilde A\otimes_A\Omega^1_A)/T^1(\tilde A/A)\to\Omega^1_{\tilde A}\to\Omega_{\tilde A/A}^1\to0
\]
obtained from \eqref{95} with the torsion-free module $\omega_{\tilde A}^1$ yields the following expansion of diagram~\eqref{31} in case $r=1$ and $p=0$.
\begin{equation}\label{14}
\xymat@C=10pt{
& 0\ar[r] & \Der(A,\omega^1_{\tilde A})\ar[r]\ar@{=}[d] & \Der(A,\omega^1_A)\ar@{=}[d]\\
& 0\ar[r] & \Hom_A(\Omega^1_A,\omega_{\tilde A}^1)\ar@{=}[dd]\ar[r] & \Hom_A(\Omega^1_A,\omega_A^1)\\
& \omega_{\tilde A}^0\ar@{=}[d] &&\omega_A^0\ar@{=}[u]\\
0\ar[r] & \Hom_{\tilde A}(\Omega^1_{\tilde A},\omega_{\tilde A}^1)\ar[r] & \Hom_{\tilde A}(\tilde A\otimes_A\Omega^1_A,\omega_{\tilde A}^1)\ar[r] & \Ext_{\tilde A}^1(\Omega_{\tilde A/A}^1,\omega_{\tilde A}^1)\ar[r] & 0
}
\end{equation}
The upper inclusion comes from the universal property of $\Omega_A^1$.
Its surjectivity is condition~\eqref{15b} and reads $\Der(A)=\Der(A,C_{\tilde A/A})$ for Gorenstein $A$ due to \eqref{41}.
Since $\omega_{\tilde A}^1$ is a canonical module of $\tilde A$ by \eqref{10} and $\Ext_{\tilde A}^1(\Omega_{\tilde A/A}^1,\omega_{\tilde A}^1)$ is the dual of $\Omega_{\tilde A/A}^1$ (see \cite[Thm.~3.3.10]{BH93}), surjectivity of the lower inclusion is equivalent to $\Omega_{\tilde A/A}^1=0$ and hence to condition~\eqref{15a} by Lemma~\ref{44}.
Therefore the diagram proves the first claim.

The remaining claims are due to the following facts.
If $A$ is singular then $C_{\tilde A/A}\subseteq\mm_A$ and $\Der(A)\subseteq\Der(A,\mm_A)$ (see \cite[(1.1)]{SW77}).
If $A$ is quasihomogeneous then $\chi(A)=\mm_A$ for some Euler derivation $\chi\in\Der(A,\mm_A)$ (see \cite{KR77} for a converse).
\end{proof}

\begin{rmk}
Let $A$ be a Gorenstein curve singularity.

\begin{asparaenum}[(a)]

\item Combining the degree-$0$ part of the leftmost square in diagram~\eqref{30} with diagram~\eqref{14} using \eqref{33} and \eqref{41} yields commutative diagram
\[
\xymat{
\tilde A\ar[r]^-{c_{\tilde A}^0} & \omega_{\tilde A}^0\ar@{^(->}[d]\ar@{^(->}[r] & \Der(A,C_{\tilde A/A})\ar@{^(->}[d]\\
A\ar@{^(->}[r]^-{c_A^0}\ar@{^(->}[u] & \omega_A^0\ar[r]_-\cong & \Der(A).
}
\]
The image of the bottom row is the module $\Delta$ of \emph{trivial derivations} (see \cite[\S3]{KW84} or \cite[\S5]{Ker84}).
Condition \eqref{15b} in Proposition~\ref{15} can therefore be rephrased as 
\[
\Der(A)/\Delta\to\Der(A,A/C_{\tilde A/A})
\]
being the zero map.

\item Proposition~\ref{51} can be deduced from Proposition~\ref{15} as follows.
It suffices to show that condition~\eqref{15b} in Proposition~\ref{15} descends from $A$ to $A_I$ for any subset $I\subseteq\{1,\dots,s\}$.
By \eqref{29}, there is a commutative diagram
\[
\xymat{
\tilde A\ar@{->>}[r]^{\tilde\pi_I}&\tilde A_I\\
A\ar@{^(->}[u]\ar@{->>}[r]^{\pi_I}&A_I\ar@{^(->}[u]
}
\]
and any $\delta_I\in\Der(A_I)$ lifts to a $\delta\in\Der(A)$ preserving $\aa_I$.
For $x_I\in A_I$, pick $x\in A$ with $\pi_I(x)=x_I$.
Assuming $\delta(x)\in C_{\tilde A/A}$, we compute using \ref{15}.\eqref{15b} for $A$ that
\[
\delta_I(x_I)\tilde A_I=\pi_I(\delta(x))\tilde\pi_I(\tilde A)=\tilde\pi_I(\delta(x)\tilde A)\subseteq\tilde\pi_I(A)=A_I
\]
and hence $\delta_I(x_I)\in C_{\tilde A_I/A_I}$ which is \ref{15}.\eqref{15b} for $A_I$.

\end{asparaenum}
\end{rmk}

We now examine SNCC for normal crossing curve singularities.

\begin{lem}\label{61}
A normal crossing curve singularity satisfies condition~\eqref{15b} of Proposition~\ref{15} if and only if it is plane.
\end{lem}

\begin{proof}
The canonical module $\omega_A^1$ of $A$ is an ideal (see \cite[Prop.~3.3.18]{BH93}).
With $A=\tilde A'$ also this ideal is standard graded and thus isomorphic to $A$ or to $\mm_A$.
Using Proposition~\ref{64}, \eqref{41} and \eqref{43}, this implies that
\[
\omega_A^1\cong
\begin{cases}
A, & \text{if }s\le 2,\\
\mm_A, & \text{if }s\ge3,
\end{cases}
\quad
\omega_{\tilde A}^1=
\begin{cases}
\omega_A^1, & \text{if }s=1,\\
\mm_A\omega_A^1, & \text{if }s\ge2.
\end{cases}
\]
If $A$ is singular then $\Der(A)\subseteq\Der(A,\mm_A)$ (see \cite[(1.1)]{SW77}) and $\chi(A)=\mm_A$ for some Euler derivation $\chi\in\Der(A,\mm_A)$.
Therefore condition~\eqref{15b} of Proposition~\ref{15} holds true if and only if $s\le 2$.
\end{proof}

Our starting point for understanding SNCC for general curve singularities are two examples that occur in the proof of the main theorem in \cite{GS14}.

\begin{exa}\label{18}\
\begin{asparaenum}[(a)]

\item\label{18a} In \cite[Ex.~3.3.(2)]{GS14}, $A$ is a plane quasihomogeneous curve defined by $\aa=\ideal{x_2(x_2-x_1^p)}$ where $p\ge1$.
Its normalization is given by $x_1=(t_1,t_2)$, $x_2=(0,t_2^p)$ and
\[
C_{\tilde A/A}=\ideal{(t_1^p,t_2^p)}=\ideal{x_1^p,x_2}.
\]
By Proposition~\ref{15}, $A$ satisfies SNCC if and only if $p=1$.

\item\label{18b} In \cite[Ex.~3.3.(3)]{GS14}, $A$ is the line arrangement defined by $\aa=\ideal{x_1x_2(x_1-x_2)}$.
Its normalization is given by $x_1=(t_1,0,t_3)$, $x_2=(0,t_2,t_3)$ and
\[
C_{\tilde A/A}=\ideal{(t_1^2,t_2^2,t_3^2)}=\ideal{x_1^2,x_2^2}.
\]
By Proposition~\ref{15}, SNCC does not hold.

\end{asparaenum}

Both statements above are shown in loc.~cit.~by a different argument due to Saito.
\end{exa}

Generalizations of Example~\ref{18} appear under the following conditions.

\begin{lem}\label{24}
Let $A$ be a non-normal crossing curve singularity different from that in Example~\ref{18}.\eqref{18a} with $s\ge2$ branches.
Assume that $A_{I}$ is normal crossing for all $I\subset\{1,\dots,s\}$ with $|I|=s-1$.
Then $A$ is the union of $s-1\ge2$ coordinate axes and a diagonal as defined by \eqref{24b}.
In particular, $A$ is homogeneous and Gorenstein of embedding dimension $n=\edim A=s-1$ with conductor $C_{\tilde A/A}=\mm_A^2$.
\end{lem}

\begin{proof}
With $s\ge2$ also $n\ge2$ and $A_i=\tilde A_i$ for $i=1,\dots,s$.
Set $J:=\{1,\dots,s-1\}$.
Then $A_J$ is normal crossing but $A$ is not.
Thus, there is a commutative diagram with exact rows
\[
\xymat{
0\ar[r] & \mm_{A_s}\ar[r] & \tilde A'\ar[r] & \tilde A'_J\ar[r] & 0\\
0\ar[r] & \aa_J\ar@{_(->}[u]\ar[r] & A\ar@{_(->}[u]\ar[r] & A_J\ar@{=}[u]\ar[r] & 0
}
\]
in which the leftmost inclusion is strict.
For any $j\in J$, both $A_J$ and $A_{\{1,\dots,s\}\setminus\{j\}}$ are normal crossing.
So there is an element $x_j\in\mm_A$ inducing uniformizers of $A_j$ and $A_s$ but zero in $\mm_{A_i}$ for any $i\ne j,s$.
Additional generators of $A$ can be chosen from $\aa_J\subseteq\mm_{A_s}^2$.
The inclusion $A\subseteq\tilde A'$ is then given by
\begin{equation}\label{47}
x_i=
\begin{cases}
u_it_ie_i+v_it_se_s, & i=1,\dots,s-1,\\
w_it_s^{p_i}e_s, & i=s,\dots,n,
\end{cases}
\end{equation}
where the $u_i\in A_i^*$ and the $v_i,w_i\in A_s^*$ are units, $p_i\ge2$, and $n\ge s-1$.
If $n\ge s$, we may assume that $p:=p_s$ is minimal and replace $t_s$ to absorb $w_s$.
For $i<s$, we replace $x_i$ and $t_i$ to absorb $v_i$ and $u_i$.
For $i>s$ and $j<s$, we have
\[
x_i=w_it_s^{p_i}e_s=w_it_s^{p_i-p}t_s^pe_s=w_i(t_je_j+t_se_s)(t_je_j+t_se_s)^{p_i-p}t_s^pe_s=w_i(x_j)x_j^{p_i-p}x_s
\]
which makes $x_i$ redundant.

So we may finally assume that $u_i=v_i=w_i=1$ and $n\le s$ in \eqref{47}.
This leaves the following two cases extending Example~\ref{18}.
\begin{gather}
\label{24a}
n=s\ge2,\quad p\ge2,\quad x_i=
\begin{cases}
t_ie_i+t_se_s, & i=1,\dots,s-1,\\
t_s^pe_s, & i=n,
\end{cases}\\
\label{24b}
n=s-1\ge2,\quad x_i=t_ie_i+t_se_s,\ i=1,\dots,n.
\end{gather}
For $n=2$, \eqref{24a} and \eqref{24b} define the curve singularities from parts \eqref{18a} and \eqref{18b} of Example~\ref{18}, respectively.
For $n\ge3$, \eqref{24a} reduces to \eqref{24b} since $x_n=x_1x_2^{p-1}$ is redundant.
Then Lemma~\ref{83} below concludes the proof.
\end{proof}

\begin{lem}\label{83}
The curve singularity $A$ defined by \eqref{24b} is homogeneous and Gorenstein.
\end{lem}

\begin{proof}
It follows from \eqref{24b} that $A=R/\aa$ is defined by $\aa=\ideal{x_k(x_i-x_j)\mid k\ne i,j}$, and hence homogeneous, and that the conductor equals $C_{\tilde A/A}=\mm_{\tilde A}^2=\mm_A^2$.
By Lemma~\ref{58}.\eqref{58a}, $\mm_A^{-1}/A$ can be seen as a subquotient in
\[
\mm_{\tilde A}^2=C_{\tilde A/A}\subseteq A\subseteq\mm_A^{-1}\subseteq\tilde A.
\]
Due to homogeneity of $A$ this is a chain of standard graded ideals.
Then with $\tilde A/\mm_{\tilde A}^2$ also $\mm_A^{-1}/A$ is non trivial at most in degrees $0$ and $1$.
It follows from \eqref{24b} that $\mm_A^{-1}$ and $A$ have equal constant parts.
Setting $t:=\sum_{i=1}^st_ie_i$, we have $t\cdot x_i=x_i^2\in A$ for $i=1,\dots,n$ and hence $t\in\mm_A^{-1}\setminus A$.
On the other hand, $x_1,\dots,x_n,t$ is a $k$-basis of the linear part of $\tilde A$ with $x_1,\dots,x_n\in A$.
Thus, $t$ represents a $k$-basis of $\mm_A^{-1}/A$ and $A$ is Gorenstein by Lemma~\ref{58}.\eqref{58b}.
\end{proof}

We can finally show that SNCC characterizes plane normal crossings among all curve singularities.

\begin{prp}\label{97}
A curve singularity satisfies SNCC if and only if it is plane normal crossing.
\end{prp}

\begin{proof}
Plane normal crossing curve singularities are Gorenstein and therefore satisfy SNCC by \eqref{43} and Proposition~\ref{15}.
Conversely, let $A$ be a curve singularity with $s$ branches satisfying SNCC.
If $s=1$ then $A=A_1=\tilde A_1=\tilde A'$ by Proposition~\ref{15}.\eqref{15a}.
We now proceed by induction on $s$ assuming $s\ge2$.
Due to Proposition~\ref{51} and the induction hypothesis, $A_{I}$ is normal crossing for all $I\subset\{1,\dots,s\}$ with $|I|=s-1$.
The only curve singularity in Example~\ref{18}.\eqref{18a} satisfying SNCC is plane normal crossing.
The conclusion of Lemma~\ref{24} contradicts to Proposition~\ref{15}.
Therefore $A$ must be normal crossing and hence plane by Lemma~\ref{61}.
\end{proof}

\section{Gorenstein singularities}\label{75}

Keeping all hypotheses of \S\ref{76}, we assume in addition that $A$ is Cohen--Macaulay and Gorenstein at $\pp\in\Spec A$.
By \eqref{10}, $\omega^r_A$ is then a canonical module of $A$ and hence (see \cite[Thms.~3.3.5.(b), 3.3.7]{BH93})
\begin{equation}\label{112}
\omega^r_{A,\pp}=\omega^r_{A_\pp}\cong A_\pp.
\end{equation}
In particular, $-^{-1}:=\Hom_{A_\pp}(-,A_\pp)$ corresponds to the duality $\Hom_{A_\pp}(-,\omega_{A,\pp}^r)$ on maximal Cohen--Macaulay modules.

\begin{lem}\label{49}
Let $A$ be Cohen--Macaulay and Gorenstein at $\pp\in\Spec A$. 
Then
\[
\omega^r_{\tilde A,\pp}=C_{\tilde A_\pp/A_\pp}\omega^r_{A,\pp}\cong C_{\tilde A_\pp/A_\pp}.
\]
\end{lem}

\begin{proof}
Let \eqref{27} be a Noether normalization.
By \eqref{10} and Hom-tensor-adjunction, 
\[
\omega^r_{\tilde A}=
\Hom_R(\tilde A,\Omega^r_R)=
\Hom_A(\tilde A,\Hom_R(A,\Omega^r_R))=
\Hom_A(\tilde A,\omega^r_A).
\]
By finiteness of $\tilde A$ over $A$ and \eqref{112}, localization at $\pp$ turns this into
\[
\omega^r_{\tilde A,\pp}=
\Hom_{A_\pp}(\tilde A_\pp,\omega^r_{A,\pp})=
\Hom_{A_\pp}(\tilde A_\pp,A_\pp)\omega^r_{A,\pp}=
C_{\tilde A_\pp/A_\pp}\omega^r_{A,\pp}.\qedhere
\]
\end{proof}

\begin{dfn}
The \emph{Jacobian} and \emph{$\omega$-Jacobian (ideal)} of $A$ are defined by 
\begin{equation}\label{72}
J_A:=F^r_A(\Omega^1_A),\quad J'_A:=\ann\coker c_A^r=\img(c_A^r\otimes(\omega_A^r)^{-1}).
\end{equation}
\end{dfn}

The ideals in \eqref{72} satisfy inclusion relations (see \cite[Prop.~3.1]{OZ87})
\begin{equation}\label{74}
J_A\subseteq J_A'\subseteq C_{\tilde A/A}.
\end{equation}
The second inclusion is due to Lemma~\ref{49} and the degree-$r$ part of the leftmost square in diagram~\eqref{30}.

\begin{rmk}\label{103}
Since $\Omega^1_A$ has rank $r$ (see \cite[(4.4)]{SS72}), $J_{A,\pp_i}=F^r_{A_{\pp_i}}(\Omega^1_{A,\pp_i})=A_{\pp_i}$ for $i=1,\dots,s$ and $J_A$ contains a regular element of $A$ by prime avoidance.
It follows that both $J_A$ and $J_A'$ are fractional ideals of $A$.
In case of $J_A'$ this follows also from $c_A$ being an isomorphism at regular primes of $A$ (see \cite[(5.7.3)]{Ker83b}) and Serre's reducedness criterion.
If $A$ is a complete intersection then $J_A=J_A'$ (see \cite[Lem.~3.1]{SS79} or \cite[Prop.~1]{Pie79} and \cite[Prop.~3.2]{OZ87} for a converse).
\end{rmk}

The statement of \cite[Prop.~3.4]{GS14} for hypersurface singularities generalizes by replacing the Jacobian by the $\omega$-Jacobian.

\begin{lem}\label{46}
Let $A$ be Cohen--Macaulay and Gorenstein at $\pp\in\Spec A$.
Then
\[
\sigma_{A,\pp}^0=(J_{A,\pp}')^{-1}
\]
as fractional ideals of $A_\pp$.
\end{lem}

\begin{proof}
We use \eqref{81} to identify $\omega_A$ with $\sigma_A$.
By \eqref{72} and the Gorenstein hypothesis this turns $c_{A,\pp}^r$ into a map $\Omega_{A,\pp}^r\onto J_{A,\pp}'\sigma_{A,\pp}^r$ with torsion cokernel. 
Then \eqref{1} localized at $\pp$ becomes $\sigma_{A,\pp}^0=\Hom_{A,\pp}(J_{A,\pp}'\sigma_{A,\pp}^r,\sigma_{A,\pp}^r)=(J_{A,\pp}')^{-1}$.
\end{proof}

\begin{dfn}\label{100}
We call $A$ \emph{free at} $\pp\in\Spec A$ if $A$ is Cohen--Macaulay, $A_\pp$ is Gorenstein and $J'_{A,\pp}$ is a Cohen--Macaulay ideal.
We say that $A$ is \emph{free} if it is free at $\mm_A$.
\end{dfn}

The Aleksandrov--Terao theorem (see \cite[\S2 Thm.]{Ale88} and \cite[Prop.~2.4]{Ter80a}) generalizes as follows.

\begin{prp}\label{69}
Let $A$ be Cohen--Macaulay and Gorenstein at $\pp\in\Spec A$.
Then freeness of $A$ at $\pp$ with $A_\pp\ne J_{A,\pp}'$ is equivalent to $A_\pp/J_{A,\pp}'$ being Cohen--Macaulay of dimension $\dim A_\pp-1$.
\end{prp}

\begin{proof}
By Remark~\ref{103}, $J_{A,\pp}'\subsetneq A_\pp$ is a fractional ideal of $A_\pp$ (see \S\ref{73}).
In particular, it contains an element of $A_\pp^\reg\setminus A_\pp^*$ and hence $\height J_{A,\pp}'\ge 1$.
The claim follows (see \cite[Satz~4.13]{HK71} and \cite[Thm.~2.1.2.(a)]{BH93}).
\end{proof}

By \eqref{74}, \eqref{37}, Corollary~\ref{45}, and Propositions~\ref{12}, there is an ascending chain of fractional ideals 
\begin{equation}\label{87}
J_A'\subseteq C_{\tilde A/A}\subseteq A\subseteq\tilde A=\sigma_{\tilde A}^0\subseteq\sigma_A^0.
\end{equation}

We deduce the following generalization of \cite[Cor.~3.7]{GS14}.

\begin{cor}\label{62}
Let $A$ be Cohen-Macaulay and free at $\pp\in\Spec A$.
Then $A$ satisfies SNCC at $\pp$ if and only if $J'_{A,\pp}=C_{\tilde A_\pp/A_\pp}$.
\end{cor}

\begin{proof}
By reflexivity of $\tilde A$ (see \cite[Lem.~2.8]{GS14}), \eqref{37} and Lemma~\ref{46}, the first and last inclusions in \eqref{87} localized at $\pp\in\Spec A$ are duals of each other.
\end{proof}

We recall an identity of ideals due to Piene (see \cite[Cor.~1]{Pie79}) in case of a smooth normalization.

\begin{lem}\label{94}
Let $A$ be Cohen--Macaulay and let $\pp\in\Spec A$ such that $A_\pp$ is Gorenstein and $\tilde A_\pp$ is regular.
Then $I_{\tilde A_\pp/A_\pp}C_{\tilde A_\pp/A_\pp}=\tilde AJ_{A,\pp}'$.
\end{lem}

\begin{proof}
Since $\tilde A_\pp$ is regular, $\Omega^1_{\tilde A,\pp}$ is locally free of rank $r$ (see \cite[(4.4),(8.7)]{SS72}).
The map $\tilde A\otimes d\nu_A$ from \eqref{95} is a presentation of $\Omega^1_{\tilde A/A}$.
Using Lemma~\ref{44}, it follows that $I_{\tilde A_\pp/A_\pp}\Omega^r_{\tilde A,\pp}$ is the image of the map
\[
\tilde A\otimes\bigwedge^rd\nu_{A,\pp}\colon\tilde A\otimes_A\Omega^r_{A,\pp}\to\Omega^r_{\tilde A,\pp}
\]
obtained by localizing the map $\tilde A\otimes\bigwedge^rd\nu_A$ from \eqref{96} at $\pp$.
Together with the degree-$r$ part of the leftmost square in diagram~\eqref{30} localized at $\pp$ this map fits into a commutative diagram
\[
\xymat{
&\Omega_{\tilde A,\pp}^r\ar[r]^-{c_{\tilde A,\pp}^r}_-\cong & \omega_{\tilde A,\pp}^r\ar@{^(->}[d]\\
&\Omega_{A,\pp}^r\ar[dl]\ar[r]^-{c_{A,\pp}^r}\ar[u]_{\bigwedge^rd\nu_{A,\pp}} & \omega_{A,\pp}^r\\
\tilde A\otimes_A\Omega_{A,\pp}^r\ar[uur]^{\tilde A\otimes\bigwedge^rd\nu_{A,\pp}}\ar[urr]_{\tilde A\otimes c_{A,\pp}^r}
}
\]
where $c_{\tilde A,\pp}^r$ is an isomorphism since $\tilde A_\pp$ is regular (see \cite[(5.7.3)]{Ker83b}).
Using Lemma~\ref{49} and \eqref{112} it follows that 
\begin{align*}
I_{\tilde A_\pp/A_\pp}C_{\tilde A_\pp/A_\pp}\omega^r_{A,\pp}
=I_{\tilde A_\pp/A_\pp}\omega^r_{\tilde A,\pp}
&=\img\Bigl(c_{\tilde A,\pp}^r\circ\tilde A\otimes\bigwedge^rd\nu_{A,\pp}\Bigr)\\
&=\img(\tilde A\otimes c_{A,\pp}^r)
=\tilde A\img c_{A,\pp}^r
=\tilde AJ_{A,\pp}'\omega^r_{A,\pp}.
\end{align*}
The claim follows by \eqref{112}.
\end{proof}

The following result generalizes \cite[Lem.~4.2]{GS14}. 

\begin{prp}\label{67}
Let $A$ be Cohen--Macaulay and free at $\pp\in\Spec A$ such that $\tilde A_\pp$ is regular.
Then $A$ satisfies SNCC at $\pp$ if and only if $J'_{A,\pp}$ is an ideal of $\tilde A_\pp$ and $\tilde A_\pp$ is unramified over $A_\pp$.
\end{prp}

\begin{proof}
By Lemma~\ref{49}, \eqref{90} and regularity of $\tilde A_\pp$ (see \cite[(5.7.3)]{Ker83b}), 
\[
C_{\tilde A_\pp/A_\pp}\cong\omega_{\tilde A,\pp}^r\cong\Omega_{\tilde A,\pp}^r=\bigwedge^r\Omega_{\tilde A,\pp}^1
\]
is locally free of rank $1$ (see \cite[(4.4),(8.7)]{SS72}).
By Corollary~\ref{62}, SNCC for $A$ at $\pp$ is equivalent to $J'_{A,\pp}=C_{\tilde A_\pp/A_\pp}$.
By Lemma~\ref{94}, this is equivalent to $\tilde A_\pp J'_{A,\pp}=J'_{A,\pp}$ and $I_{\tilde A_\pp/A_\pp}=\tilde A_\pp$.
The claim follows using Lemma~\ref{44}.
\end{proof}

\section{Complex analytic spaces}\label{36}

In order to consider analytic spaces, we need in addition to the hypotheses of \S\ref{76} that $k$ is non-discretely valued.
Therefore we assume that $k=\CC$ and consider (germs of) complex analytic spaces.

Let $X$ be a reduced $r$-equidimensional complex analytic space with normalization $\nu_X\colon\tilde X\to X$.
Then there is an $\O_X$-coherent graded $(\Omega_X,d)$-module $\omega_X$ and a trace map 
$c_X\colon\Omega_X\to\omega_X$ (see \cite{Bar78}).
The \emph{Jacobian} and \emph{$\omega$-Jacobian (ideals)} $J_X$ and $J'_X$ of $X$ are defined as in \eqref{72}.
Taking stalks at $x\in X$ leads to the corresponding objects for $A=\O_{X,x}$.
By a \emph{complex analytic singularity} we mean the germ of a complex analytic space.

\begin{dfn}\label{98}
We say that a reduced equidimensional complex analytic space $X$ satisfies \emph{Saito's normal crossing condition (SNCC)} or that $X$ is \emph{free} if $A=\O_{X,x}$ satisfies the corresponding property for all $x\in X$ (see Definition \ref{80} and Definition \ref{100}).
We say that $X$ satisfies a property in codimension (up to) $c$ if it does outside of an analytic subset of codimension at least $c+1$.
We define the corresponding properties for complex analytic singularities by requiring them for some representative.
\end{dfn}

\begin{rmk}\label{38}
That $X$ satisfies SNCC means that the inclusion of coherent $\O_X$-modules $(\nu_X)_*\omega_{\tilde X}^0\into\omega_X^0$ is an equality (see \cite[p.195, Ex.~i)]{Bar78}).
In particular, SNCC is an open condition.

Freeness is an open condition as well.
In fact, Cohen--Macaulay loci of coherent $\O_X$-modules are open (see \cite[Satz~7]{Sch64}) and the Gorenstein locus of a Cohen--Macaulay $X$ is the open set where the coherent $\O_X$-module $\omega_X^r$ is locally free of rank $1$ (see \cite[Thm.~3.3.7.(a)]{BH93}).

Both SNCC and freeness are satisfied in codimension $0$, that is, generically.
\end{rmk}

The following is the analytic version of Proposition~\ref{66}.

\begin{prp}\label{88}
A reduced equidimensional complex analytic singularity $X$ satisfies SNCC if it does in codimension one.
\end{prp}

\begin{proof}
Assume that $X$ satisfies SNCC in codimension one and replace $X$ by a representative.
Let $x\in X$ and set $A:=\O_{X,x}$.
Consider the coherent $\O_X$-module $\F=\omega_X^0/(\nu_X)_*\omega_{\tilde X}^0$ and the coherent $\O_X$-ideal $\I=\ann\F$.
By hypothesis and Remark~\ref{38}, $V(\I)=\Supp\F$ and hence $V(\I_x)$ has codimension at least $2$.
In particular, for any $\pp\in\Spec A$ with $\height\pp=1$, $\ann(\F_x)=\I_x\not\in\pp$ and hence $\omega_{A,\pp}^0/\omega_{\tilde A,\pp}^0=(\F_x)_\pp=0$.
In other words, $A$ satisfies SNCC in codimension one.
Then $\O_{X,x}=A$ satisfies SNCC due to Proposition~\ref{66}.
This means that $X$ satisfies SNCC at $x$.
Therefore $X$ satisfies SNCC as claimed.
\end{proof}

In case of smooth irreducible components our results from \S\ref{40} apply to a transversal curve singularity.

\begin{prp}\label{68}
Let $X$ be a reduced equidimensional complex analytic singularity with smooth local irreducible components in codimension one.
If $X$ satisfies SNCC then it must be a normal crossing divisor in codimension one.
\end{prp}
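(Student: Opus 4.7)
The plan is to reduce the conclusion at each codimension-one point of $X$ to the curve case via a transversal slice, and then invoke Proposition~\ref{97}. By Proposition~\ref{66}, SNC is a codimension-one condition, so it suffices to verify the normal crossing property at each codimension-one point $x \in X$. After deleting a closed subset of codimension at least two, I may assume that at $x$ the singular locus $\Sigma$ of $X$ is smooth of dimension $r - 1$, where $r := \dim X$, and that the analytic type of $(X, x)$ is constant along $\Sigma$; by hypothesis the local irreducible components $(X_i, x)$ of $(X, x)$ are smooth.

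Next I would choose a smooth germ $(H, x)$ in the ambient manifold of codimension $r - 1$ that is transverse at $x$ to $\Sigma$ and to each $X_i$. Then $(Y, y) := (X \cap H, x)$ is a reduced analytic curve singularity whose branches $Y_i = X_i \cap H$ are smooth; since the branches are already regular, $\tilde Y = \bigsqcup_i Y_i$. Constancy of the analytic type along the smooth stratum $\Sigma$ yields an analytic product decomposition $(X, x) \cong (Y, y) \times (\Sigma, x)$, and analogously $(\tilde X, \tilde x) \cong (\tilde Y, \tilde y) \times (\Sigma, x)$. Because $\Sigma$ is smooth, Kersken's regular differential complex splits as a completed external tensor product, giving
\[
\omega^0_{\tilde X, \tilde x} \;=\; \omega^0_{\tilde Y, \tilde y} \,\hat\otimes\, \O_{\Sigma, x}, \qquad \omega^0_{X, x} \;=\; \omega^0_{Y, y} \,\hat\otimes\, \O_{\Sigma, x},
\]
so the hypothesis $\omega^0_{\tilde X, \tilde x} = \omega^0_{X, x}$ descends to SNC on $(Y, y)$.

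Proposition~\ref{97} now forces $(Y, y)$ to be plane normal crossing: two smooth branches meeting transversally in a $2$-plane. Reassembling via the product decomposition, $(X, x)$ consists of exactly two smooth components of dimension $r$ whose tangent spaces together span an $(r+1)$-dimensional subspace of the ambient tangent space; this is the local structure of a normal crossing divisor at $x$. The main obstacle is the product decomposition used above: it rests on constancy of analytic type along a smooth codimension-one stratum of the singular locus, a standard but non-trivial ingredient from complex analytic stratification theory. An algebraic alternative would be to localize $\O_{X,x}$ at the height-one prime corresponding to $\Sigma$ (and, if necessary, complete) and to extend the one-dimensional results of \S\ref{40} to the resulting complete local ring; this trades stratification input for a generalization of Proposition~\ref{97}, but avoids analytic product structures entirely.
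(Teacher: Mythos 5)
Your reduction-to-a-curve strategy is the same as the paper's, but the step you yourself flag as the ``main obstacle'' is a genuine gap, not a standard fact: constancy of the analytic type of $(X,x)$ along a smooth codimension-one stratum of the singular locus, after removing a set of codimension two, is \emph{false} in general. Stratification theory (Thom--Mather, Whitney) gives only \emph{topological} local triviality along strata; analytic moduli can vary continuously. The divisor $D=\{xy(x+y)(x+yz)=0\}$, mentioned in the remark immediately after this proposition, is exactly such an example: all components are smooth, the singular locus is generically a smooth curve of codimension one in $D$, yet the cross-ratio of the four tangent planes varies along it, so the germ at a generic point of the singular locus is \emph{not} an analytic product of a plane curve with the stratum. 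So you cannot invoke a product decomposition $(X,x)\cong(Y,y)\times(\Sigma,x)$ before having used SNC in an essential way; as written, your argument would ``prove'' triviality statements that fail for such examples. (Your fallback suggestion --- localizing at the height-one prime and redoing \S\ref{40} over the resulting local ring, whose residue field is no longer algebraically closed --- is a different route but is only sketched; Proposition~\ref{97} as proved does not apply there.)

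The paper gets around precisely this difficulty by an induction on the irreducible components rather than a one-shot product structure for all of $X$: assuming $X_1\cup\dots\cup X_{s-1}$ is already normal crossing (so given by coordinate subspaces) and that the smooth component $X_s$ contains the generic point of $Z=(\Sing X)^{\red}$, one writes $X_s$ as a graph via the implicit function theorem and absorbs the unit coefficients into the coordinates; this makes the equations of $X':=X_1\cup\dots\cup X_s$ independent of the transversal variables, i.e.\ $X'$ (not $X$!) is analytically trivial along $Z$ at generic points. Crucially, Proposition~\ref{51} is then needed to pass SNC from $X$ to the subunion $X'$, after which the transversal slice is a curve satisfying SNC and Proposition~\ref{97} forces $X'$ to be normal crossing with $s\le2$. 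If you want to salvage your write-up, replace the appeal to ``constancy of analytic type'' by this component-by-component triviality argument together with Proposition~\ref{51}; the descent of SNC to the slice of the trivial germ is then the same step the paper uses.
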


\begin{proof}
Set $r:=\dim X$ and denote by $m:=n-r$ the codimension of $X$ in some smooth ambient space $(\CC^n,0)$.
We may freely move the base point of the germ $X$ to a general point in codimension one.
Let $Z$ be the reduced singular locus of $X$.
We may assume that $Z\ne\emptyset$ is smooth of codimension one and that the 
irreducible components $X_1,\dots,X_s$ of $X$ are smooth containing $Z$.
By Proposition~\ref{51}, SNCC descends to any union of irreducible components of $X$.
We may therefore assume that $2\le s\le3$ and that $X_1\cup\dots\cup X_{s-1}$ is a normal crossing divisor.
Then there are local coordinates such that
\begin{align}\label{19}
Z&=\{x_1=\dots=x_{m+1}=0\},\\
\nonumber X_i&=\{x_1=\cdots=\wh x_i=\dots=x_{m+1}=0\},\ i=1,\dots,s-1.
\end{align}
By the implicit function theorem, there is a $j\in\{1,\dots,m+1\}$ such that 
\[
X_s=\{x_i=y_i(x_j,x_{m+2},\dots,x_n)\mid j\ne i=1,\dots,m+1\}.
\]
If $y_i\ne0$ then we may write $y_i=x_j^{p_i}u_i$ with $u_i(0,x_{m+2},\dots,x_n)\ne0$.
We may then assume that the latter and hence also $u_i$ is a unit.
Dividing $x_i$ by $u_i$ results in $u_i=1$ leaving \eqref{19} unchanged.
This makes the defining equations of $X_1,\dots,X_s$, and hence of $X$, independent of $x_{m+2},\dots,x_n$.
Then $X$ becomes a product $X=C\times Z$ where $C$ is a curve in the transversal slice $\{x_{m+2}=\cdots=x_n=0\}$.
By Proposition~\ref{3}, with $X$ also $C$ satisfies SNCC.
Then Proposition~\ref{97} forces $C$ to be plane normal crossing.
In particular, $s=2$ and $X$ is a normal crossing divisor.
\end{proof}

\begin{exa}
The free divisor $D=\{xy(x+y)(x+xz)=0\}$ has smooth reduced singular locus $Z=\{x=y=0\}$ and $4$ smooth local irreducible components at points of $Z$.
However it is not analytically trivial along $Z$ in codimension one.
\end{exa}

We are finally ready to prove our main result.

\begin{proof}[Proof of Theorem~\ref{70}]
Suppose first that $X$ satisfies SNCC.
By Proposition~\ref{88}, SNCC for $X$ is a codimension-one condition.
We may therefore assume that $X$ is free and that $\tilde X$ is smooth.
Proposition~\ref{67} then implies that $\nu_X$ is unramified.
By Lemma~\ref{44} this means that $X$ has smooth local irreducible components.
Proposition~\ref{68} then forces $X$ to be a normal crossing divisor in codimension one.
The converse implication follows from Propositions~\ref{3}, \ref{97}, and \ref{88}.
\end{proof}

We conclude with an application of our approach to splayed divisors.
By a \emph{divisor} we mean a reduced hypersurface singularity.
Let $D_1, D_2\subset (\CC^{r+1},0)$ be divisors.
Then $D_1$ and $D_2$ are called \emph{splayed} (see \cite{Fab13}) if 
\[
D_1\cong D_1'\times(\CC^{r_2+1},0),\quad D_2\cong (\CC^{r_1+1},0)\times D'_2.
\]
for divisors $D_i'\subset (\CC^{r_i+1},0)$ for $i=1,2$ under some isomorphism $(\CC^{r+1},0)\cong(\CC^{r_1+1},0)\times(\CC^{r_2+1},0)$.
In this case we call the union $D_1\cup D_2$ a \emph{splayed divisor}.
In other words, splayed divisors are \emph{product unions}
\[
D_1'\prodcup D'_2:=D_1'\times(\CC^{r_2+1},0)\cup(\CC^{r_1+1},0)\times D'_2
\]
of divisors (see \cite[\S3]{Dam96}).
Aluffi and Faber characterized splayedness in terms of logarithmic differential forms (see \cite[Thm.~2.12]{AF13}).
Passing to the residual part of these forms yields a characterization in terms of regular differential forms.

\begin{prp}
Let $D_i=V(h_i)\subseteq (\CC^{r+1},0)$ for $i=1,2$ be divisors.
If $D_1$ and $D_2$ are splayed then the natural map
\begin{equation}\label{77}
\omega^0_{D_1\sqcup D_2}=\omega^0_{D_1}\oplus\omega^0_{D_2}\to\omega^0_D
\end{equation}
is an isomorphism.
The converse holds true if $D=D_1\cup D_2$ is free.
\end{prp}

\begin{proof}
The map in \eqref{77} is obtained using \eqref{99} by applying $\Hom_{\O_D}(-,\omega_D^0)$ to the inclusion 
\begin{equation}\label{16}
\O_{D_1\sqcup D_2}=\O_{D_1}\times\O_{D_2}\infrom\O_D.
\end{equation}

If $D_1$ and $D_2$ have a common irreducible component $D'$, which is not the case if they are are splayed, then applying $\Hom_{\O_D}(-,\omega_D^0)$ to the commutative diagram
\[
\xymat{
\O_{D_1}\times\O_{D_2}\ar@{->>}[d] & \O_D\ar@{->>}[d]\ar@{_(->}[l] & \\
\O_{D'}\times\O_{D'} & \O_{D'}\ar[l]_-{(\id,\id)}
}
\]
and using \eqref{99} yields a commutative diagram
\[
\xymat{
\omega^0_{D_1}\oplus\omega^0_{D_2}\ar[r] & \omega^0_D & \\
\omega^0_{D'}\oplus\omega^0_{D'}\ar@{^(->}[u]\ar[r]^-{+} & \omega^0_{D'}\ar@{^(->}[u]
}
\]
whose top row is \eqref{77}.
As $\omega^0_{D'}\ne0$ this shows that \eqref{77} is not injective in this case.
Therefore we may assume that $D_1$ and $D_2$ do not have a common irreducible component.
Then \eqref{16} has a torsion cokernel and \eqref{77} is an inclusion since $\omega^0_D$ is torsion-free.

As in Proposition~\ref{12} there is a commutative diagram
\begin{equation}\label{79}
\xymat{
\sigma^0_{D_1}\oplus\sigma^0_{D_2}\ar@{^(->}[r]\ar[d]^\cong & \sigma^0_D\ar[d]^\cong\\
\omega^0_{D_1}\oplus\omega^0_{D_2}\ar@{^(->}[r] & \omega^0_D.
}
\end{equation}
In fact, using \eqref{42} and \eqref{16} one computes that 
\[
c_{D_1}+c_{D_2}=\res{dh_1}{h_1}+\res{dh_2}{h_2}\mapsto\res{h_2dh_1+h_1dh_2}{h_1h_2}=\res{d(h_1h_2)}{h_1h_2}=c_D
\]
by the lower inclusion in \eqref{79}.
By \cite[Thm.~2.2]{AF13}, $D_1$ and $D_2$ are splayed if and only if the natural inclusion of Jacobian ideals 
\begin{equation}\label{78}
J_D\into h_2J_{D_1}\oplus h_1J_{D_2}
\end{equation}
is an equality.
Lemma~\ref{46} identifies the upper inclusion in \eqref{79} as the dual of \eqref{78} and the first claim follows.
Indeed, dualizing $\O_{D_1}=\O_D/h_1\O_D$ over $\O_D$ yields
\[
\Hom_{\O_D}(\O_{D_1},\O_D)=\ker(h_1\colon\O_D\to\O_D)=h_2\O_D=h_2\O_{D_1}
\]
and hence by Hom-tensor-adjunction
\[
\Hom_{\O_D}(-,\O_D)
=\Hom_{\O_{D_1}}(-,\Hom_{\O_D}(\O_{D_1},\O_D))
=h_2\Hom_{\O_{D_1}}(-,\O_{D_1})
\]
on $\O_{D_1}$-modules.
Conversely, if $D$ is free then $J_D$ is reflexive and hence
\[
(\sigma^0_D)^{-1}=J_D\into h_2J_{D_1}\oplus h_1J_{D_2}\into h_2\cdot(\sigma^0_{D_1})^{-1}\oplus h_1\cdot(\sigma^0_{D_2})^{-1}=(\sigma^0_{D_1}\oplus\sigma^0_{D_2})^{-1}.
\]
Thus, dualizing an equality in \eqref{79} yields an equality in \eqref{78}. 
\end{proof}

\begin{rmk}
If the divisors $D_1$ and $D_2$ have no common irreducible component then
\[
\tilde D\onto D_1\sqcup D_2\onto D
\]
and condition \eqref{77} can be seen as a weak form of SNCC.
\end{rmk}

\bibliographystyle{amsalpha}
\bibliography{snc}

\providecommand{\bysame}{\leavevmode\hbox to3em{\hrulefill}\thinspace}
\providecommand{\MR}{\relax\ifhmode\unskip\space\fi MR }
\providecommand{\MRhref}[2]{%
  \href{http://www.ams.org/mathscinet-getitem?mr=#1}{#2}
}
\providecommand{\href}[2]{#2}
\begin{thebibliography}{Ker83b}

\bibitem[AF13]{AF13}
Paolo Aluffi and Eleonore Faber, \emph{Splayed divisors and their {C}hern
  classes}, J. Lond. Math. Soc. (2) \textbf{88} (2013), no.~2, 563--579.
  \MR{3106736}

\bibitem[Ale88]{Ale88}
A.~G. Aleksandrov, \emph{Nonisolated {S}aito singularities}, Mat. Sb. (N.S.)
  \textbf{137(179)} (1988), no.~4, 554--567, 576. \MR{981525 (90b:32024)}

\bibitem[Ale90]{Ale90}
\bysame, \emph{Nonisolated hypersurface singularities}, Theory of singularities
  and its applications, Adv. Soviet Math., vol.~1, Amer. Math. Soc.,
  Providence, RI, 1990, pp.~211--246. \MR{1089679 (92b:32039)}

\bibitem[Ale12]{Ale12}
\bysame, \emph{Multidimensional residue theory and the logarithmic de {R}ham
  complex}, J. Singul. \textbf{5} (2012), 1--18. \MR{2928930}

\bibitem[AT01]{AT01}
Aleksandr~G. Aleksandrov and Avgust~K. Tsikh, \emph{Th\'eorie des r\'esidus de
  {L}eray et formes de {B}arlet sur une intersection compl\`ete singuli\`ere},
  C. R. Acad. Sci. Paris S\'er. I Math. \textbf{333} (2001), no.~11, 973--978.
  \MR{1872457 (2002m:32004)}

\bibitem[AT08]{AT08}
\bysame, \emph{Multi-logarithmic differential forms on complete intersections},
  J. Sib. Fed. Univ. Math. Phys. \textbf{1} (2008), no.~2, 105--124.

\bibitem[Bar78]{Bar78}
Daniel Barlet, \emph{Le faisceau {$\omega ^{\cdot }_{X}$} sur un espace
  analytique {$X$}\ de dimension pure}, Fonctions de plusieurs variables
  complexes, {III} ({S}\'em. {F}ran\c cois {N}orguet, 1975--1977), Lecture
  Notes in Math., vol. 670, Springer, Berlin, 1978, pp.~187--204. \MR{521919
  (80i:32037)}

\bibitem[BG80]{BG80}
Ragnar-Olaf Buchweitz and Gert-Martin Greuel, \emph{The {M}ilnor number and
  deformations of complex curve singularities}, Invent. Math. \textbf{58}
  (1980), no.~3, 241--281. \MR{571575 (81j:14007)}

\bibitem[BH93]{BH93}
Winfried Bruns and J{\"u}rgen Herzog, \emph{Cohen-{M}acaulay rings}, Cambridge
  Studies in Advanced Mathematics, vol.~39, Cambridge University Press,
  Cambridge, 1993. \MR{MR1251956 (95h:13020)}

\bibitem[Dam96]{Dam96}
James Damon, \emph{Higher multiplicities and almost free divisors and complete
  intersections}, Mem. Amer. Math. Soc. \textbf{123} (1996), no.~589, x+113.
  \MR{1346928}

\bibitem[Eis95]{Eis95}
David Eisenbud, \emph{Commutative algebra with a view toward algebraic
  geometry}, Graduate Texts in Mathematics, vol. 150, Springer-Verlag, New
  York, 1995. \MR{1322960 (97a:13001)}

\bibitem[Fab13]{Fab13}
Eleonore Faber, \emph{Towards {T}ransversality of {S}ingular {V}arieties:
  {S}played {D}ivisors}, Publ. Res. Inst. Math. Sci. \textbf{49} (2013), no.~3,
  393--412. \MR{3097012}

\bibitem[GR71]{GR71}
H.~Grauert and R.~Remmert, \emph{Analytische {S}tellenalgebren},
  Springer-Verlag, Berlin, 1971, Unter Mitarbeit von O. Riemenschneider, Die
  Grundlehren der mathematischen Wissenschaften, Band 176. \MR{0316742 (47
  \#5290)}

\bibitem[GS14]{GS14}
Michel Granger and Mathias Schulze, \emph{Normal crossing properties of complex
  hypersurfaces via logarithmic residues}, Compos. Math. \textbf{150} (2014),
  no.~9, 1607--1622. \MR{3260143}

\bibitem[HK71]{HK71}
J{\"u}rgen Herzog and Ernst Kunz (eds.), \emph{Der kanonische {M}odul eines
  {C}ohen-{M}acaulay-{R}ings}, Lecture Notes in Mathematics, Vol. 238,
  Springer-Verlag, Berlin, 1971, Seminar {\"u}ber die lokale Kohomologietheorie
  von Grothendieck, Universit{\"a}t Regensburg, Wintersemester 1970/1971.
  \MR{0412177 (54 \#304)}

\bibitem[HS06]{HS06}
Craig Huneke and Irena Swanson, \emph{Integral closure of ideals, rings, and
  modules}, London Mathematical Society Lecture Note Series, vol. 336,
  Cambridge University Press, Cambridge, 2006. \MR{2266432 (2008m:13013)}

\bibitem[Jac71]{Jac71}
H.~Jacobinski, \emph{Two remarks about hereditary orders}, Proc. Amer. Math.
  Soc. \textbf{28} (1971), 1--8. \MR{0272807}

\bibitem[Ker83a]{Ker83b}
M.~Kersken, \emph{Der {R}esiduenkomplex in der lokalen algebraischen und
  analytischen {G}eometrie}, Math. Ann. \textbf{265} (1983), no.~4, 423--455.
  \MR{721880 (86a:14015)}

\bibitem[Ker83b]{Ker83a}
Masumi Kersken, \emph{Cousinkomplex und {N}ennersysteme}, Math. Z. \textbf{182}
  (1983), no.~3, 389--402. \MR{696535 (85h:32014)}

\bibitem[Ker84]{Ker84}
\bysame, \emph{Regul\"are {D}ifferentialformen}, Manuscripta Math. \textbf{46}
  (1984), no.~1-3, 1--25. \MR{735512 (85j:14032)}

\bibitem[KR77]{KR77}
Ernst Kunz and Walter Ruppert, \emph{Quasihomogene {S}ingularit\"aten
  algebraischer {K}urven}, Manuscripta Math. \textbf{22} (1977), no.~1, 47--61.
  \MR{0463180 (57 \#3138)}

\bibitem[Kun86]{Kun86}
Ernst Kunz, \emph{K\"ahler differentials}, Advanced Lectures in Mathematics,
  Friedr. Vieweg \& Sohn, Braunschweig, 1986. \MR{864975 (88e:14025)}

\bibitem[KW84]{KW84}
Ernst Kunz and Rolf Waldi, \emph{\"{U}ber den {D}erivationenmodul und das
  {J}acobi-{I}deal von {K}urvensingularit\"aten}, Math. Z. \textbf{187} (1984),
  no.~1, 105--123. \MR{753425 (85j:14033)}

\bibitem[Lip84]{Lip84}
Joseph Lipman, \emph{Dualizing sheaves, differentials and residues on algebraic
  varieties}, Ast\'erisque (1984), no.~117, ii+138. \MR{759943}

\bibitem[Liu02]{Liu02}
Qing Liu, \emph{Algebraic geometry and arithmetic curves}, Oxford Graduate
  Texts in Mathematics, vol.~6, Oxford University Press, Oxford, 2002,
  Translated from the French by Reinie Ern{\'e}, Oxford Science Publications.
  \MR{1917232}

\bibitem[OZ87]{OZ87}
Anna Oneto and Elsa Zatini, \emph{Jacobians and differents of projective
  varieties}, Manuscripta Math. \textbf{58} (1987), no.~4, 487--495. \MR{894866
  (88e:14027)}

\bibitem[Pie79]{Pie79}
Ragni Piene, \emph{Ideals associated to a desingularization}, Algebraic
  geometry ({P}roc. {S}ummer {M}eeting, {U}niv. {C}openhagen, {C}openhagen,
  1978), Lecture Notes in Math., vol. 732, Springer, Berlin, 1979,
  pp.~503--517. \MR{555713 (81a:14001)}

\bibitem[Pol15]{Pol15}
Delphine Pol, \emph{A characterization of freeness for complete intersections},
  arXiv.org \textbf{1512.06778} (2015).

\bibitem[Sai80]{Sai80}
Kyoji Saito, \emph{Theory of logarithmic differential forms and logarithmic
  vector fields}, J. Fac. Sci. Univ. Tokyo Sect. IA Math. \textbf{27} (1980),
  no.~2, 265--291. \MR{MR586450 (83h:32023)}

\bibitem[Sch64]{Sch64}
G{\"u}nter Scheja, \emph{Fortsetzungss\"atze der komplex-analytischen
  {Cohomologie} und ihre algebraische {Charakterisierung}}, Math. Ann.
  \textbf{157} (1964), 75--94. \MR{0176466 (31 \#738)}

\bibitem[Ser65]{Ser65}
Jean-Pierre Serre, \emph{Alg\`ebre locale. {M}ultiplicit\'es}, Cours au
  Coll\`ege de France, 1957--1958, r\'edig\'e par Pierre Gabriel. Seconde
  \'edition, 1965. Lecture Notes in Mathematics, vol.~11, Springer-Verlag,
  Berlin, 1965. \MR{0201468 (34 \#1352)}

\bibitem[Sha69]{Sha69}
Rodney~Y. Sharp, \emph{The {C}ousin complex for a module over a commutative
  {N}oetherian ring.}, Math. Z. \textbf{112} (1969), 340--356. \MR{0263800}

\bibitem[SS72]{SS72}
G{\"u}nter Scheja and Uwe Storch, \emph{Differentielle {E}igenschaften der
  {L}okalisierungen analytischer {A}lgebren}, Math. Ann. \textbf{197} (1972),
  137--170. \MR{0306172 (46 \#5299)}

\bibitem[SS74]{SS74}
G.~Scheja and U.~Storch, \emph{Lokale {V}erzweigungstheorie}, Institut des
  Math\'ematiques, Universit\'e de Fribourg, Fribourg, 1974, Vorlesungen
  {\"u}ber Kommutative Algebra (Wintersemester 1973/74), Schriftenreihe des
  Mathematischen Institutes der Universit{\"a}t Freiburg, No. 5. \MR{0412167}

\bibitem[SS79]{SS79}
G{\"u}nter Scheja and Uwe Storch, \emph{Residuen bei vollst\"andigen
  {D}urchschnitten}, Math. Nachr. \textbf{91} (1979), 157--170. \MR{563607}

\bibitem[SW73]{SW73}
G{\"u}nter Scheja and Hartmut Wiebe, \emph{\"{U}ber {D}erivationen von lokalen
  analytischen {A}lgebren}, Symposia {M}athematica, {V}ol. {XI} ({C}onvegno di
  {A}lgebra {C}ommutativa, {INDAM}, {R}ome, 1971), Academic Press, London,
  1973, pp.~161--192. \MR{0338461 (49 \#3225)}

\bibitem[SW77]{SW77}
\bysame, \emph{\"{U}ber {D}erivationen in isolierten {S}ingularit\"aten auf
  vollst\"andigen {D}urchschnitten}, Math. Ann. \textbf{225} (1977), no.~2,
  161--171. \MR{0508048}

\bibitem[Ter80]{Ter80a}
Hiroaki Terao, \emph{Arrangements of hyperplanes and their freeness. {I}}, J.
  Fac. Sci. Univ. Tokyo Sect. IA Math. \textbf{27} (1980), no.~2, 293--312.
  \MR{MR586451 (84i:32016a)}

\end{thebibliography}
\end{document}